\newtheorem{thm}{Theorem}[section]
\newtheorem{lem}[thm]{Lemma}
\newtheorem{prop}[thm]{Proposition}
\newtheorem{defn}[thm]{Definition}
\newtheorem{remark}[thm]{Remark}
\begin{document}
\begin{frontmatter}
\title{Time fractional stochastic differential equations driven by pure jump L\'evy noise}

\author[1]{Peixue Wu}
\ead{peixuew2@illinois.edu}

\author[2]{Zhiwei Yang}
\ead{zwyang@mail.sdu.edu.cn}

\author[3]{Hong Wang}
\ead{hwang@math.sc.edu}

\author[1]{Renming Song}
\ead{rsong@illinois.edu}

\address[1]{Department of Mathematics, University of Illinois at Urbana-Champaign, Urbana, IL 61801, USA}

\address[2]{School of Mathematics, Shandong University, Jinan, Shandong 250100, China}

\address[3]{Department of Mathematics, University of South Carolina, Columbia, South Carolina 29208, USA}

\begin{abstract}
In this paper we introduce a variable order time fractional differential equation driven by pure jump L\'evy noise, which models the motion of a particle exhibiting memory effect. 
We prove the well-posedness of this equation without assuming any integrability
condition on the initial condition and the large jump coefficient, by using a truncation argument. Under some extra conditions, we also derive some $L^p$ moment estimates  on the solutions.
As an application of moment estimates, we prove the H\"older regularity of the solutions.
\end{abstract}

\begin{keyword}
		variable-order;  time fractional stochastic differential equation; 
	 well-posedness;  moment estimates; regularity 
\end{keyword}
\end{frontmatter}
	
\section{Introduction}
\subsection{Background and outline of the paper}
Stochastic differential equations (abbreviated as SDE) are used to describe phenomena such as particle movements with a random forcing, often modeled by Brownian motion noise (continuous)\cite{Eva14,Karatzas,Oks} or L\'evy noise (jump type) \cite{App,Rong}. \\
\indent The classical Langevin equation describes the random motion of a particle
immersed in a liquid due to the interaction with the surrounding liquid molecules. 
Let $m$ be the mass of the particle, and $u(t)$ be the instantaneous velocity of the particle. Then Newton's equation of motion for the Brownian particle is given by the Langevin equation
\begin{equation}\label{LanEq}
m\frac{du}{dt} = -\gamma u + \xi(t).
\end{equation}
Here $\gamma$ is the friction coefficient per unit mass, and $\xi(t)$ is the random force that accounts for the effect of background noise and is usually described by a white noise and its correlation function satisfies
\begin{equation*}\label{Lan:e1}
\langle \xi(t) \rangle = 0,\ \langle \xi(t_1), \xi(t_2) \rangle = 2D \delta(t_1-t_2),
\end{equation*}
where $D = \gamma k_B T$, $k_B$ is the Boltzman constant and $T$ is the absolute temperature. Understanding $\xi(t)$ as $\dot{W_t}$, where $W_t$ is a Brownian motion, \eqref{LanEq} turns into the following SDE: 
\begin{equation}\label{LanEq: SDE}
du= -\frac{\gamma}{m} u(t)dt + \frac{1}{m}dW_t.
\end{equation}
The more general SDE 
\begin{equation}\label{LanEq: SDE general}
du= f(t, u(t))dt + g(t,u(t))dW_t
\end{equation}
has been studied extensively since the last century. \\
\indent However, when the particle is immersed in viscoelastic liquids, the movement of the particle is modelled as anomalous diffusion processes and \eqref{LanEq: SDE general} can not account for this type of motion. In fact, the motion of the particle is described by a generalized Langevin equations (GLE) \cite{Zwanzig}:
\begin{equation}\label{fLanEq}
\frac{du}{dt} + \int_0^t K(t-s) u(s)ds = \eta(t),
\end{equation}
where the kernel function $K(t)$ accounts for the memory effect and the random noise $\eta(t)$ is not a white noise. \\
\indent Following the principle of \eqref{fLanEq}, various generalized Langevin equations were considered in different situations. We are particularly interested in the case when the kernel can be modelled by fractional derivatives.
Fractional Gaussian noise with correlation function
\begin{equation*}\label{fLan:e1}
\langle \eta(t) \rangle = 0, \ \langle \eta(0), \eta(t) \rangle = 
\Gamma(\alpha)t^{-\alpha},\ \alpha 
\in (0,1), 
\end{equation*}
was considered in \cite{BurBar,Kouxie,Lut}.
Using the famous fluctuation-dissipation theorem which links the memory kernel $K(t)$ with the correlation function of $\eta(t)$, the fractional Langevin equation takes the form 
\begin{equation}\label{fLanEq:e2}
\frac{du}{dt} + \gamma D_t^\alpha u = \eta(t),\ \alpha \in (0,1).
\end{equation}
We refer the reader to \cite{Jeon} for a more physically satisfactory modeling, and \cite{Besalu1, Besalu2, DeyaTindel} for a more mathematically satisfactory treatment of the equation \eqref{fLanEq} when the random noise is given by a fractional Brownian motion.\\
\indent 
In general, 
the surrounding medium of the particle may change with time, which leads to the change of the fractional dimension of the media that in turn leads to the change of the order of the fractional Langevin equation via the Hurst index \cite{EmbMae,MeeSik} and yields 
a variable-order fractional Langevin equation of the form
\begin{equation}\label{fLanEq:e4}
\frac{du}{dt} + \gamma D_t^{\alpha(t)} u = \eta(t).
\end{equation}
\indent All of the above examples focus on fractional Langevin equations with (fractional) Brownian noise. However, when the surrounding medium of the particle exhibits strong heterogeneity, the particle may experience large jumps that lead to an additional L{\'e}vy driven noise \cite{BenSchMee}. Thus, understanding $\eta(t)$ as the derivative of a stochastic process $Z_t$, \eqref{fLanEq} turns into 
\begin{equation}\label{fLanEq: fbm}
u(t) = u(0)+\int_0^t \kappa(t,s) u(s)ds + Z_t
\end{equation}
where $\kappa(t,s) = -\int_s^t K(u-s)du$ is the integral kernel. \\
\indent Based on the above discussions, we consider a generalization of \eqref{fLanEq: fbm} ,
where the random noise is given by a  multiplicative L\'evy noise and the integral kernel is modeled by variable-order fractional derivatives. Our model can be expressed as
\begin{equation}\label{model}
\begin{aligned} 
du(t) & =\big(\lambda\cdot{}_{0}^{R}D_{t}^{\alpha(t)}u(t) + f(t,u(t))\big)dt + \int_{|z|<1}g\big(t,u(t-),z\big)\widetilde{N}(dt,dz) \\
&\quad  + \int_{|z| \ge 1}h\big(t,u(t-),z\big)N(dt,dz), 
\qquad t \in [0,T],\\
u(0)&=u_0,
\end{aligned}
\end{equation}
where $T>0, \lambda \in \mathbb{R}$, $_{0}^{R}D_{t}^{\alpha(t)}u(t)$ represents friction with memory effect, $f(t,u(t))$ is the external force and the random terms represent the jump type noise.  Here the variable-order Riemann-Liouville derivative is given by 
\begin{equation} \label{fraction}
_{0}^{R}D_{t}^{\alpha(t)}u(t) : =\frac{d}{dt}\int_0^t \frac{u(s)}
{\Gamma(1-\alpha(t))(t-s)^{\alpha(t)}}ds,
\end{equation}
the kernel function $\kappa(t,s)$ is given by
$$
\kappa(t,s) = \frac{\lambda}{\Gamma(1-\alpha(t))(t-s)^{\alpha(t)}},
$$
and $\alpha, f,g,h$ are functions in proper spaces which make the integrals meaningful.  One can see similar definitions of fractional derivatives in \cite{GunLi_frac,GunLi_int,MeeSik,MetKla00,Pod,ShiWang,WanZhe,ZhangKar-B17}. \\
\indent As we will see in \eqref{integral form}, our equation can be seen as a special form of stochastic Volterra equations, with no memory effect on the random noise. For properties of general stochastic Volterra equations, 
with memory effect on the random noise, we refer the reader to \cite{Brunner, Protter, Zhang} for white noise driven stochastic Volterra equations and \cite{Agram, Benth, Chong, Erika, Protter2} for L\'evy noise driven ones. \\
\indent Compared to previous work, our main novelty is that we do not require integrability conditions on the initial distribution and large jump coefficient for the well-posedness of the solution. The difficulty is that we do not have Markov property, and  thus we can not glue together different pieces easily. In \cite{Erika}, they assumed some integrability conditions on the large jump coefficient to show the well-posedness of the solution. In our case, we are able to remove those conditions based on a truncation method and some detailed analysis on the structure of the equation. To the best of our  knowledge, this has not been done in non-Markovian settings.\\
\indent In the time fractional derivative setting, the convergence rate of the iteration procedure of the solution is slower than the usual SDE, and it can be given explicitly by the fractional order of the derivative, see \eqref{convergence rate} in the proof of Theorem \ref{main:1}. As a comparison, the convergence rate of the usual SDE is given by \cite[(2.19), page 290]{Karatzas} $$||u_n-u||_{2,T} \le c\sum_{k\ge n+1} (\frac{(LT)^k}{k!})^{1/2}.$$ 
Also, the explicit relation between the regularity of the solution and the regularity of the fractional order is also derived, see Theorem \ref{main:3}.  \\

\textbf{Outline of the paper}: The rest of this paper is organized as follows: 
In the remainder of this section, we will give an illustrative example, and present some preliminaries and notations.
In Section 2, we will formulate the assumptions and main theorems. In Section 3, we prove the well-posedness of \eqref{model}. 
In fact, 
we prove a stronger result, Proposition \ref{stronger result}, which implies the well-posedness. In Section 4, we prove some moment estimates of the solution, and as an application, we prove the H\"older regularity of the solution. 
In Section 5, we discuss some further questions, including some questions which are interesting but are outside the scope of this paper. 

\subsection{An illustrative example}
Before formulating our main problem, 
we consider the following illustrative example:
$$du(t)  =\big(\lambda\cdot {}_{0}^{R}D_{0}^{\alpha(t)}u(t) + f(t,u(t))\big)dt + h(t,u(t-))\delta_{t_0}(t), t \in [0,T], u(0) = u_0 \in \mathbb{R},
$$
where $t_0 \in (0,T)$ is a fixed time and $\delta_{t_0}$ is the Dirac measure supported at $\{t_0\}$. 
The above equation can be understood as the following Volterra equation with a jump at time $t_0$:
$$
u(t) = u_0 + \lambda \int_0^t \frac{u(s)}{\Gamma(1-\alpha(t))(t-s)^{\alpha(t)}}ds + \int_0^t f(s,u(s))ds + \int_0^t h(s,u(s-))
\delta_{t_0}(ds).
$$

Due to the memory of the term $_{0}^{R}D_{t}^{\alpha(t)}u(t)$, solving the above deterministic equation is not as easy as the memoryless case. 
The reason is that after the jump, the solution not only depends on the behavior at the jump time, but also depends on the past. Thus we need a finer piecewise analysis as below:\\
If $t < t_0$, the equation 
reduces to the Volterra equation with no jump:
$$u(t) = u_0 + \lambda \int_0^t \frac{u(s)}{\Gamma(1-\alpha(t))(t-s)^{\alpha(t)}}ds + \int_0^t f(s,u(s))ds,$$
and the solution is denoted as $v_0(t), t<t_0$.\\
If $t=t_0$, the solution is given by 
$$
u(t_0) = u_0 + \lambda \int_0^{t_0} \frac{v_0(s)}{\Gamma(1-\alpha(t_0))(t_0-s)^{\alpha(t_0)}}ds + \int_0^{t_0} f(s,v_0(s))ds 
+ h(t_0,v_0(t_0-)).
$$
If $t> t_0$, the solution is given by the following equation
$$\begin{aligned}
u(t) & = u_0 + \lambda \int_0^{t_0} \frac{v_0(s)}{\Gamma(1-\alpha(t))(t-s)^{\alpha(t)}}ds + \int_0^{t_0} f(s,v_0(s))ds + h(t_0,v_0(t_0-))\\
& \quad + \lambda \int_{t_0}^t \frac{u(s)}{\Gamma(1-\alpha(t))(t-s)^{\alpha(t)}}ds + \int_{t_0}^t f(s,u(s))ds.
\end{aligned}$$
If we define $$k(t): = u_0 + \lambda \int_0^{t_0} \frac{v_0(s)}{\Gamma(1-\alpha(t))(t-s)^{\alpha(t)}}ds + \int_0^{t_0} f(s,v_0(s))ds + h(t_0,v_0(t_0-)),$$
then the solution for $t>t_0$ is given by the following Volterra equation:
\begin{equation} \label{detexample}
u(t) = k(t) + \lambda\int_{t_0}^t \frac{u(s)}{\Gamma(1-\alpha(t))(t-s)^{\alpha(t)}}ds + \int_{t_0}^t f(s,u(s))ds.
\end{equation}

Under some conditions on $k(t)$, the above equation \eqref{detexample} has a unique solution denoted as $v_1(t), t>t_0$. Thus the global solution on $[0,T]$ can be given by 
$$
u(t)=\begin{cases}
v_0(t), &t< t_0 \\
v_0(t_0-) + h(t_0,v_0(t_0-)),& t = t_0 \\
v_1(t), &t_0< t \le T.
\end{cases}
$$ 
We plot the solutions of the ordinary differential equation and the variable-order fractional differential equation with the same jump at $t=t_0=0.5$ in Figure \ref{plot_u}:
\begin{figure}[!hbt]
	\centering
	\begin{minipage}[t]{0.5\linewidth}
		\centering
		\includegraphics[width=2in]{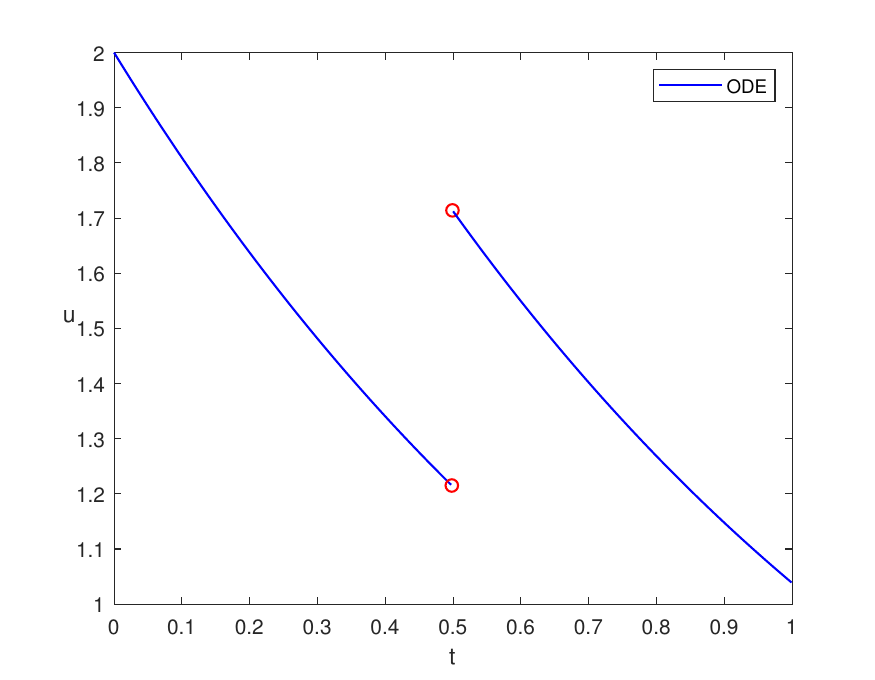}
	\end{minipage}%
	\begin{minipage}[t]{0.5\linewidth}
		\centering
		\includegraphics[width=2in]{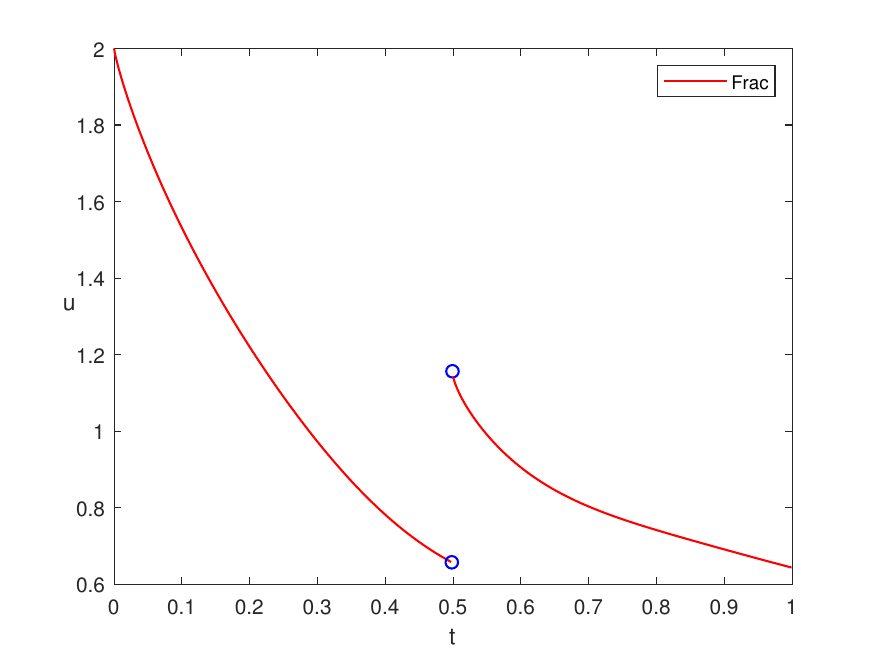}
	\end{minipage}%
	\caption{ 
		Plots of solutions: ordinary differential equation solutions ( `left one') and the variable-order fractional differential equation ( `right one') }
	\label{plot_u}
\end{figure} 

In this paper, we actually deal with a random analog of the above example. We will use a Poisson random measure to model jumps. There are two types of jumps, the first type are the ``small jumps'' and there are infinitely many of them in any finite time interval.
We deal with the small jumps by compensating the jumps 
and the procedure is similar to the white noise case. For large jumps, we use the interlacing procedure, see chapter 2 of \cite{App}. However, due to the memory term, we need a finer analysis than the usual interlacing procedure in \cite[Chapter 2]{App}.

\subsection{Preliminaries and notation}
Let $\big( \Omega, \mathcal{F}, \{\mathcal{F}_t\}_{t\ge0},\mathbb{P} \big)$ be a filtered probability space with $\{\mathcal{F}_t\}_{t\ge0}$ satisfying the usual conditions. 
Let $N(dt,dz)$ be an $\{\mathcal{F}_t\}$-adapted Poisson random measure on $\mathbb{R}_+\times \mathbb{R}^d \backslash \{0\}$ with $N(0,\cdot) = 0,\ a.s.$ and with intensity measure
$dt\nu(dx)$, where $\nu $ is a L\'evy measure which means $\nu(\{0\}) = 0$ and 
$\int_{\mathbb{R}^d} (1\wedge |x|^2) \nu(dx) < \infty.$ 
The compensated Poisson random measure is defined as
$$\widetilde{N}(dt,dz) := N(dt,dz) - \nu(dz)dt,$$ 
which is a martingale measure, see \cite[Chap. 4]{App} for the definition of martingale measure. 
We always assume the initial value $u_0\in \mathcal{F}_0$.
The argument of this paper also works for $\mathbb R^n$-valued SDEs for any $n\ge 1$. Since we are mainly interested in the interplay between the memory term and the random noise, we concentrate on the scalar case. \\
\indent 
Let $\mathbb{R}_{+}:= [0,+\infty)$ and let $B(0, 1)$ be the unit open ball in $\mathbb{R}^d$. 
The mappings $f: \mathbb{R}_+ \times \mathbb{R} \rightarrow \mathbb{R},\ g: \mathbb{R}_+ \times \mathbb{R} \times B(0, 1) \backslash \{0\} \rightarrow \mathbb{R},\ h: \mathbb{R}_+ \times \mathbb{R} \times B(0, 1)^c \rightarrow \mathbb{R} $ appearing in \eqref{model} are assumed to be measurable. They are called the the drift, small jump, large jump coefficients respectively. Sometimes we regard $g$ as a function on $\mathbb{R}_+ \times \mathbb{R} \times \mathbb{R}^d \backslash \{0\}$ by identifying it with 
$g(s,x,z) 1_{\{z: 0<|z|<1\}}(z)$, and regard $h$ as a function on $\mathbb{R}_+ \times \mathbb{R} \times \mathbb{R}^d \backslash \{0\}$ by identifying it with
$h(s,x,z) 1_{\{z: |z|\ge 1\}}(z)$.
To ensure the predictability of the jump coefficients, we always assume the following

\noindent {\bf Assumption (A0)}: For fixed $t \ge 0,z \in \mathbb{R}^d \backslash \{0\}$, 
$$
x \mapsto g(t,x,z) \mbox{  and  } x \mapsto h(t,x,z),
$$ 
are continuous functions. 

The above assumption ensures the predictability of the jump coefficients. Let $\mathcal{P} =  \{\varphi: \Omega \times [0,\infty) \times \mathbb{R}^d\backslash \{0\} \rightarrow \mathbb{R} : \varphi \ \text{predictable} \}$
be the family of all the predictable mappings. 
We refer the reader to \cite[page 216]{App} for the definition of predictable mappings.
For $1\le p \le 2$, we define
\begin{align*}
& \mathcal{L}: = \{\varphi\in \mathcal{P}: \forall t>0, \int_0^t \int_{\mathbb{R}^d\backslash \{0\}} |\varphi(s,z)|N(ds,dz) < \infty,\ a.s.\}; \\
& \mathcal{L}^p: = \{\varphi\in \mathcal{P}: \forall t>0, \mathbb{E}\int_0^t \int_{\mathbb{R}^d\backslash \{0\}} |\varphi(s,z)|^p \nu(dz)ds < \infty\}; \\
& \mathcal{L}^{p,loc}: = \{\varphi\in \mathcal{P}: \forall t>0, \int_0^t \int_{\mathbb{R}^d\backslash \{0\}} |\varphi(s,z)|^p\nu(dz)ds<\infty,\ a.s.\}.
\end{align*}
In \cite[Chap 4] {App}, stochastic integrals have been defined when the jump coefficient is in $\mathcal{L}$ and the small jump coefficient is in $\mathcal{L}^{i,loc},i=1,2$. 
For $p\in (1, 2)$, we use the standard truncation technique $\varphi = \varphi 1_{|\varphi|>1} + \varphi 1_{|\varphi|\le 1} \in \mathcal{L}^{1,loc} + \mathcal{L}^{2,loc}$ to define the integral of $\varphi$. Note that given a c\'adl\'ag process $u(t)$, $h(t,u(t-),z), g(t,u(t-),z)$ are predictable. \\
\indent For the variable-order time fractional integral operator defined by \eqref{fraction}, if we integrate both sides of \eqref{model}, we get the following integral form: for any $t\ge 0$,
\begin{equation}\label{integral form} 
\begin{split}
u(t) = &\ u_0 + \int_{0}^{t}\kappa(t,s)u(s)ds +\int_{0}^{t}f\big(s,u(s)\big)ds\\
& \int_{0}^{t}\int_{|z|<1}g\big(s,u(s-),z\big)\widetilde{N}(ds,dz)+ \int_{0}^{t}\int_{|z|\ge1}h\big(s,u(s-),z\big)
N(ds,dz),
\end{split}
\end{equation} 
where 
\begin{equation}\label{kappa}
\kappa(t,s): = \frac{\lambda}{\Gamma(1-\alpha(t))(t-s)^{\alpha(t)}}.
\end{equation}
Here is the definition of (strong) solution to \eqref{model}:
\begin{defn}
	We say an $\{\mathcal{F}_t\}-$adapted c\`adl\`ag process $\{u(t)\}_{t \ge 0}$ is a strong solution of \eqref{model} if for each $t>0$, 
	$$\begin{aligned}
	&\int_0^t |\kappa(t,s)u(s)|ds, \int_0^t |f(s,u(s))|ds,\ \int_{0}^{t}\int_{|z|<1}g\big(s,u(s-),z\big)\widetilde{N}(ds,dz),\\
	& \int_{0}^{t}\int_{|z|\ge1} |h(s,u(s-),z)|N(ds,dz),
	\end{aligned}
	$$ 
	are well-defined and finite $\mathbb{P}$-almost surely, and for each $t \ge 0$, \eqref{integral form} holds $\mathbb{P}$-almost surely.
	
	We say the strong solution of \eqref{model} is (pathwise) unique if two solutions $\tilde{u}(t), u(t)$ with $\mathbb{P}(u(0) = \tilde{u}(0)) = 1$ satisfy $$\mathbb{P}(\tilde{u}(t) = u(t), \forall t\ge0) = 1.$$
\end{defn}
Note that in the definition above, we can replace initial value $u_0$ in \eqref{integral form} by an adapted c\`adl\`ag process $\{k(t)\}$. In this case, by a strong solution to
the corresponding equation we mean an $\{\mathcal{F}_t\}-$adapted c\`adl\`ag process $\{u(t)\}_{t \ge 0}$ such that integrals in the first display in the definition above are well-defined and finite $\mathbb{P}$-almost surely, and for each $t \ge 0$, \eqref{integral form},  with $u_0$ replaced by the process $\{k(t)\}$, holds $\mathbb{P}$-a.s.. In this paper, we always deal with strong solutions in the sense above.

\vspace{.1in}

\noindent \textbf{Notational convention:} 
\begin{itemize}
\item Throughout the paper, we use capital letters $C_1(\cdot), C_2(\cdot), \cdots$ to denote constants in the statement of the results, 
the arguments inside the brackets are the parameters the constants depend on.
The lowercase letters $c_1(\cdot), c_2(\cdot), \cdots$ will denote constants used in the proof, and we do not care about their values and they may be different from one appearance to another. 
The labeling of lower case constants will start anew in every proof. 
We will only specify the dependence of the constants on $p$ and time $T>0$ and will not mention dependence on $\alpha, \lambda, f,g,h$.
\item 
The integral with respect to time $\int_0^t$ will always be understood as $\int_{(0,t]}$ in this paper.

\item For two measurable sets $A,B$ with positive measure, we write $A=B$ a.s., if there exist two null sets $N_1,N_2$ such that $A \cup N_1 = B \cup N_2$. We write  $A\subseteq B$ a.s., if there exist two null sets $N_1,N_2$ such that $A \cup N_1 \subseteq B \cup N_2$. In this paper, a.s. means $\mathbb{P}$-a.s.. We say two c\`adl\`ag  processes $u_1(t),u_2(t)$ are the same if for each $t\ge 0$, $u_1(t)=u_2(t)$ a.s.. Since the processes are c\`adl\`ag, it is equivalent to the fact that there exists a null set $N$, such that $u_1(t)=u_2(t)$ for all $t\ge0$ on $N^c$. 
\end{itemize}

\section{Assumptions and main results}
Before we give the assumptions, 
we introduce the 
definitions of $L^p$ Lipschitz continuity and $L^p$ linear growth condition for the jump coefficients. 

\begin{defn}
	Let $p>0$. We say that the small jump coefficient $g: \mathbb{R}_+ \times \mathbb{R} \times \{z\in \mathbb{R}^d: 0< |z|< 1\} \rightarrow \mathbb{R}$ is $L^p$ Lipschitz continuous if there is a  locally bounded function $L(\cdot)$ defined on $[0,\infty)$ such that for any $t\ge0, u, \tilde{u}\in \mathbb{R}$, 
	$$
	\int_{|z|< 1}|g(t,u,z)-g(t,\tilde{u},z)|^p\nu(dz)\le L(t)|u-\tilde{u}|^p.
	$$
	We say that g satisfies the $L^p$ linear growth condition if there is a  locally bounded function $L(\cdot)$ defined on $[0,\infty)$ such that for any $t\ge0, u\in \mathbb{R}$, 
	$$ 
	\int_{|z|< 1}|g(t,u,z)|^p\nu(dz)\le L(t)(1+ |u|^p).
	$$
\end{defn}

We can similarly define the $L^p$-Lipschitz continuity and $L^p$-linear growth condition for the large jump coefficient $h(t,u,z)$.
When $p=2$, the $L^p$-Lipschitz continuity and $L^p$-linear growth condition
above reduces to the usual $L^2$-Lipschitz continuity and $L^2$-linear growth condition in \cite[Chap 6]{App}.
Our assumptions on the coefficients are:\\

\noindent {\bf Assumption (A1)} (Fractional order condition): 
The fractional order function $\alpha: \mathbb{R}_{+} \rightarrow [0,\alpha^*]$ is a continuous function, where $\alpha^* \in (0,1)$.\\

\noindent {\bf Assumption (A2)} (Lipschitz condition):
 For some $p \in [1,2]$, the small jump coefficient $g$ is $L^p$-Lipschitz continuous and the drift coefficient $f$ is Lipschitz continuous, i.e., for the same locally bounded function $L(\cdot)$ above, it holds that
$$
|f(t,u) - f(t,\tilde{u})| \le L(t)|u-\tilde{u}|,\ t\ge 0, u, \tilde{u}\in\mathbb R.
$$
\noindent {\bf Assumption (A3)} (Linear growth condition):
For the same $p \in [1,2]$ as in \textbf{(A2)}, the small jump coefficient $g$ satisfies $L^p$-linear growth condition and the drift coefficient $f$ satisfies the linear growth condition, i.e., for the same locally bounded function $L(\cdot)$ above, it holds that
$$
|f(t,u)| \le L(t)(1+|u|),\ t\ge 0, u \in\mathbb R.
$$
Under the above assumptions, we can get the well-posedness of \eqref{model}:
\begin{thm} \label{main:1}
	If Assumptions \textbf{(A1)--(A3)} hold, then there exists a unique solution to \eqref{model} for any given initial distribution $u_0\in \mathcal{F}_0$.
\end{thm}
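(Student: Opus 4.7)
I would follow the truncation-plus-interlacing strategy suggested by the introduction and the illustrative example. First, I truncate the initial condition: set $u_0^{(n)} := u_0 \mathbf{1}_{\{|u_0| \le n\}}$ and, assuming well-posedness has been established for bounded initial data, construct solutions $u^{(n)}$ with initial value $u_0^{(n)}$. Pathwise uniqueness forces $u^{(n+1)}=u^{(n)}$ on $\{|u_0|\le n\}$, so these solutions patch together to yield a solution for the original (possibly unintegrable) initial data since $\bigcup_n\{|u_0|\le n\}=\Omega$ a.s. Conditioning on $\mathcal{F}_0$ further reduces matters to deterministic, bounded $u_0$.

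Second, I isolate the large-jump part via interlacing. Since $\nu$ is a L\'evy measure, $\nu(\{|z|\ge 1\})<\infty$, so the jump times of $N$ on $\{|z|\ge 1\}$ form a locally finite sequence $0<\tau_1<\tau_2<\cdots$ a.s. On each interval $[\tau_i,\tau_{i+1})$, equation \eqref{model} has no large jumps and, exactly as in the illustrative example, reduces to the Volterra-type equation
\begin{equation*}
u(t) = k_i(t) + \int_{\tau_i}^t \kappa(t,s) u(s)\,ds + \int_{\tau_i}^t f(s,u(s))\,ds + \int_{\tau_i}^t\!\!\int_{|z|<1} g(s,u(s-),z)\,\widetilde{N}(ds,dz),
\end{equation*}
where the forcing $k_i(t)$ is $\mathcal{F}_{\tau_i}$-measurable and incorporates the memory integral $\int_0^{\tau_i}\kappa(t,s)u(s)\,ds$, the past drift and compensated small-jump integrals, and the preceding large jumps $h(\tau_j,u(\tau_j-),z_j)$ for $j\le i$. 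By induction on $i$, the theorem reduces to existence and uniqueness for this jump-free Volterra equation on a fixed bounded interval with a given c\`adl\`ag $\mathcal{F}_{\tau_i}$-measurable forcing.

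For that I run a Picard iteration starting from $u^{(0)} \equiv k_i$. Combining Assumption (A2), the moment inequality Lemma \ref{inequality: moment} for the compensated integral, and the key bound $\int_{\tau_i}^t |\kappa(t,s)|\,ds \lesssim (t-\tau_i)^{1-\alpha^*(T)}$, which holds by (A1) because $\alpha^*(T)<1$, I obtain a recursive singular-kernel estimate of the schematic form
\begin{equation*}
\mathbb{E}\,|u^{(m+1)}(t) - u^{(m)}(t)|^p \;\le\; C\!\int_{\tau_i}^t (t-s)^{-\alpha^*(T)}\, \mathbb{E}\,|u^{(m)}(s) - u^{(m-1)}(s)|^p\,ds.
\end{equation*}
Iterating this fractional convolution produces Mittag--Leffler-type decay $C^m T^{m(1-\alpha^*(T))}/\Gamma(m(1-\alpha^*(T))+1)$, which is summable. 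Hence the iterates form a Cauchy sequence in $L^p(\Omega; D([\tau_i,\tau_{i+1}]))$, their limit is a solution, and applying the same estimate to the difference of any two solutions yields pathwise uniqueness.

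The main obstacle is the memory-dependent forcing $k_i(t)$. Unlike the Markovian case, after each large jump the equation does not restart cleanly from $u(\tau_i)$: it still depends on the entire past trajectory through the kernel $\kappa(t,\cdot)$. One must therefore verify carefully that $k_i$ is $\mathcal{F}_{\tau_i}$-measurable, c\`adl\`ag in $t$, and locally bounded, so that the Picard scheme genuinely applies on each $[\tau_i,\tau_{i+1}]$ and the pieces glue into a global c\`adl\`ag adapted solution. A second, genuinely fractional feature is that the usual geometric $C^m/m!$ decay of SDE Picard iteration is replaced by the slower Mittag--Leffler rate governed by $1-\alpha^*(T)$; proving the corresponding fractional Gronwall lemma (with the explicit rate advertised in the statement) is the technical heart of the argument and will be packaged as the stronger Proposition \ref{stronger result}.
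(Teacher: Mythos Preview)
Your plan matches the paper's strategy almost exactly: Picard iteration with Mittag--Leffler decay for the jump-free equation (Proposition~\ref{stronger result}, Part~I), a localization argument to drop moment assumptions on the forcing (Part~II), and interlacing over the large-jump times. One point of order deserves care, though. You place the truncation of $u_0$ \emph{before} the interlacing, as if once $u_0$ is bounded the rest follows. But since no integrability is assumed on $h$, after the first large jump the forcing $k_1(t)$---which contains the term $h(\tau_1,u(\tau_1-),\Delta P(\tau_1))$---need not lie in $L^p$ even when $u_0$ is bounded. The paper therefore builds the localization into Proposition~\ref{stronger result} itself: it proves well-posedness for the jump-free equation with a \emph{general} $\mathcal{F}_0$-measurable (later $\mathcal{F}_{\tau_i}$-measurable) c\`adl\`ag forcing $k$, with no moment condition, by truncating on the events $\Omega_N=\{\sup_t|k(t)|\le N\}\in\mathcal{F}_0$ and using a stopping-time device (Lemma~\ref{l:3.10}) to pull the indicator $1_{\Omega_N}$ inside the compensated stochastic integral. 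That general version is then invoked afresh on each interval $[\tau_i,\tau_{i+1})$. Your ``locally bounded'' remark about $k_i$ hints at this, but the plan should make explicit that the truncation lives inside the proposition and is reapplied at every interlacing step, not just once for $u_0$.

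A smaller technical point: your displayed Picard estimate is pointwise in $t$, but to land in $L^p(\Omega;D([\tau_i,\tau_{i+1}]))$ and hence obtain a c\`adl\`ag limit you need the estimate with $\sup_{s\le t}$ inside the expectation. Controlling the supremum of the memory integral $\sup_{s\le t}\int_0^s\kappa(s,r)\,\cdot\,dr$ is exactly what the scaling trick of Lemma~\ref{scale} is for; the paper uses it precisely here.
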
	

\begin{remark}
	(A2) can be weakened to non-Lipschitz coefficients as in \cite{Wangz}. However, the trick is also based on the classical approximating procedure, which is standard so we only consider the Lipschitz case. The non-trivial part is due to the presence of the memory term. We do not need any condition on the large jump term and the initial distribution. In \cite{Erika}, they deal with the infinite-dimensional setting but assume a Lipschitz-type condition on the large jump coefficient due to the presence of the memory term.
\end{remark}

If the initial value 
$u_0$ does not have finite moment, then we can not expect the solution to have finite moment. Thus to get $L^p$ moment estimates for the solution to \eqref{model}, we need to assume $\mathbb{E}(|u_0|^p) < \infty$. 
Apart from that, we also need some assumptions on the large jump coefficients. The moment estimates of the solution to 
\eqref{model} are as follows: 

\begin{thm} \label{main:2}
Assume \textbf{(A1)} and let $u(t)$ be a solution to \eqref{model}.\\
	\textbf{Case 1:} Suppose $1 \le p \le 2$, $u_0\in L^p$ (i.e., $||u_0||_p^p: =\mathbb{E}|u_0|^p < \infty$), 
	the drift coefficient $f$ satisfies the linear growth condition and the jump coefficients $g, h$ satisfy $L^p$ linear growth conditions.
	Then for any $T>0$, we have
	\begin{equation}\label{moment const}
	\mathbb{E}\big[\sup_{0\le t \le T}|u(t)|^p\big] \le C_1(p,T,||u_0||_p)< \infty,
	\end{equation}
	where $$C_1(p,T,||u_0||_p): = c(p,T, ||u_0||_p) E_{1-\alpha^*(T),1}\big(c(p,T)\Gamma(1-\alpha^*(T))T^{1-\alpha^*(T)}\big)< \infty,$$
	and $E_{p,q}$ is the Mittag-Leffler function defined in Lemma \ref{Gronwall}.
	
	\noindent \textbf{Case 2:} 
	Suppose that $p\ge 2$, and that, in addition to the assumptions in \textbf{Case 1}, the small jump coefficient $g$ satisfies $L^2$ linear growth condition. Then \eqref{moment const} holds.
\end{thm}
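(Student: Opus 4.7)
The plan is to set $\phi(t):=\mathbb{E}[\sup_{0\le s\le t}|u(s)|^p]$ and reduce everything to a fractional Volterra inequality
\[
\phi(t)\le C_0(p,T,\|u_0\|_p)+C(p,T)\int_0^t\bigl(1+(t-r)^{-\alpha^*(T)}\bigr)\phi(r)\,dr,
\]
after which Lemma~\ref{Gronwall} delivers the Mittag--Leffler bound stated in the theorem. To keep $\phi(t)$ a priori finite I first localize with $\tau_n:=\inf\{t\ge 0:|u(t)|\ge n\}$: then $|u(s\wedge\tau_n)|\le n$ for $s<\tau_n$, while $|u(\tau_n)|$ is dominated by $n$ plus a single jump whose $L^p$ moment is finite by the $L^p$ linear growth of $g$ and $h$ at the state $u(\tau_n-)$. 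Writing $\phi_n$ for the sup-moment of the stopped process, I bound $\phi_n$ uniformly in $n$ and close by Fatou as $n\to\infty$.

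Starting from~\eqref{integral form} and $|a_1+\cdots+a_5|^p\le 5^{p-1}\sum_{i=1}^5|a_i|^p$, $\phi_n(t)$ splits into five contributions. The initial term is $\|u_0\|_p^p$. The drift term is handled by H\"older and the linear growth of $f$, yielding $C(p,T)\int_0^t(1+\phi_n(r))\,dr$. For the compensated small-jump integral I invoke the moment inequality of Lemma~\ref{inequality: moment} (Kunita-type): in Case~1 only the $L^p$ quantity $\mathbb{E}\int_0^t\!\int_{|z|<1}|g|^p\nu(dz)\,dr$ appears, which $L^p$ growth bounds by $C(p,T)\int_0^t(1+\phi_n(r))\,dr$; in Case~2 ($p>2$) the inequality produces an additional quadratic-variation term $\mathbb{E}\bigl(\int_0^t\!\int_{|z|<1}|g|^2\nu(dz)\,dr\bigr)^{p/2}$, which is exactly what the supplementary $L^2$ growth hypothesis controls, after Jensen and the elementary bound $(1+x^2)^{p/2}\le C_p(1+x^p)$. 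For the large-jump term I decompose $\int\!\int h\,N$ into its compensated martingale $\widehat J$ plus its compensator; $\widehat J$ is treated exactly as the small-jump piece, and the compensator is treated as the drift after $|\int_{|z|\ge 1}h\,\nu(dz)|^p\le \nu(\{|z|\ge 1\})^{p-1}\int_{|z|\ge 1}|h|^p\nu(dz)$. Because $\nu$ is finite on $\{|z|\ge 1\}$, $L^p$ growth of $h$ alone suffices even when $p>2$, so no extra $L^2$ hypothesis on $h$ is needed in Case~2.

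The nontrivial step is the memory term $\widetilde B(s):=\int_0^s\kappa(s,r)u(r)\,dr$. Because $\kappa(s,r)$ carries the variable order $\alpha(s)$ it is not a convolution kernel, and the naive estimate $\sup_{s\le t}\int_0^s|\kappa(s,r)||u(r)|^p\,dr\le C(T)\sup_{s\le t}|u(s)|^p$ produces a circular Gronwall inequality. I resolve this in three sub-steps. Jensen applied to the non-negative kernel $|\kappa(s,\cdot)|$, whose mass on $[0,s]$ is uniformly bounded on $[0,T]$ since $\alpha^*(T)<1$, gives $|\widetilde B(s)|^p\le C(T)\int_0^s|\kappa(s,r)||u(r)|^p\,dr$. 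The pointwise bound $\alpha(s)\le\alpha^*(T)$ then yields $|\kappa(s,r)|\le C(T)(1+(s-r)^{-\alpha^*(T)})$, reducing the task to controlling $\sup_{s\le t}\int_0^s(1+(s-r)^{-\alpha^*(T)})v(r)\,dr$ with $v(r):=\sup_{r'\le r}|u(r'\wedge\tau_n)|^p$. Because $v$ is non-decreasing, splitting $F(s_2)-F(s_1)$ into integrals over $[0,s_1]$ and $[s_1,s_2]$ shows that $s\mapsto\int_0^s(1+(s-r)^{-\alpha^*(T)})v(r)\,dr$ is itself non-decreasing, so its sup on $[0,t]$ is attained at $s=t$; taking expectation produces exactly the fractional convolution $C(p,T)\int_0^t(1+(t-r)^{-\alpha^*(T)})\phi_n(r)\,dr$. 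Summing the five contributions gives the Volterra inequality above for $\phi_n$; Lemma~\ref{Gronwall} upgrades it to $\phi_n(t)\le C_1(p,T,\|u_0\|_p)$ uniformly in $n$, and Fatou finishes both cases.
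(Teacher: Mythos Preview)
Your strategy coincides with the paper's: split $|u|^p$ into the five pieces of \eqref{integral form}, control each by the moment inequalities of Lemma~\ref{inequality: moment} and the linear-growth hypotheses, reduce to a fractional Volterra inequality for the running sup-moment, and close with Lemma~\ref{Gronwall}. Two differences are worth flagging.

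First, your explicit localisation via $\tau_n$ is a genuine improvement in rigour: Lemma~\ref{Gronwall} requires the function $\varphi$ to be locally bounded, and the paper's argument tacitly assumes $\phi(t)<\infty$ to begin with. Your stopped version fills that gap. Second, for the large-jump term you decompose $\int\!\int h\,N$ into a compensated martingale plus compensator and then argue that finiteness of $\nu$ on $\{|z|\ge 1\}$ lets $L^p$-growth of $h$ control the $L^2$ term when $p>2$. This is correct but unnecessary: the paper simply invokes \eqref{large1}, which already gives the $L^p$ bound on the uncompensated integral for all $p\ge 1$ without any $L^2$ input. Using \eqref{large1} directly is cleaner and explains at a glance why no extra $L^2$ hypothesis on $h$ is needed in Case~2.

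One small warning about your memory-term argument: the splitting of $F(s_2)-F(s_1)$ over $[0,s_1]$ and $[s_1,s_2]$ does \emph{not} show monotonicity as written, because on $[0,s_1]$ the kernel difference $(s_2-r)^{-\alpha^*}-(s_1-r)^{-\alpha^*}$ is negative. The claim is nonetheless true: after the substitution $\rho=s-r$ one gets $F(s)=\int_0^s \rho^{-\alpha^*}v(s-\rho)\,d\rho$, and now the splitting into $[0,s_1]$ and $[s_1,s_2]$ works because $v$ is non-decreasing. The paper handles this same point via the scaling substitution of Lemma~\ref{scale}; either route yields the needed bound $\sup_{s\le t}\int_0^s(s-r)^{-\alpha^*}v(r)\,dr\le\int_0^t(t-r)^{-\alpha^*}\phi_n(r)\,dr$.
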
	

As an application of the moment estimates above, we can establish the H\"older regularity of the solution $u(t)$ to \eqref{model}. H\"older regularity means that for any  two times $t_1 \le t_2$, with $t_2 - t_1$ small, we have for some $p>0, \beta>0$, 
$$
\mathbb{E}|u(t_2)-u(t_1)|^p \le c(p)|t_2 - t_1|^{\beta}.
$$

To establish the H\"older regularity of the solution to \eqref{model}, we first need a more restrictive condition on the fractional order $\alpha(t)$:  

\noindent {\bf Assumption (A1')}:
The fractional order function $\alpha: \mathbb{R}_{+} \rightarrow (0,1)$ is locally H\"older continuous of order $\gamma>0$, i.e., for any $T>0$, there exists $C_2(T)$ such that for any $0\le t_1 < t_2 \le T$, 
$$|\alpha(t_2)-\alpha(t_1)| \le C_2(T)|t_2-t_1|^{\gamma}.$$ 

\begin{thm} \label{main:3}
	Suppose $u(t)$ is a solution to \eqref{model} and that for some $p \ge 1$
	$$
	\mathbb{E}\big[\sup_{0\le t \le T}|u(t)|^p\big] \le C_1:= C_1(p,T,||u_0||_p)< \infty.
	$$
	We further assume that \textbf{(A1')} holds, $f$ satisfies the linear growth condition and $g,h$ satisfy the $L^p$ linear growth conditions. If $p\ge 2$, we also assume that $g$  satisfies the $L^2$ linear growth condition. 
	Then for any $T>0$, 
there exist
	$C_3:=C_3(p,T)>0, C_4 = \min\{1,p\gamma, p(1-\alpha^*(T))\}>0$
	such that
	\begin{equation}
	\mathbb{E}|u(t_2)-u(t_1)|^p \le C_3|t_2 - t_1|^{C_4}, 
	0\le t_1< t_2 \le T.
	\end{equation}
\end{thm}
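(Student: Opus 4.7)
The plan is to subtract the integral form \eqref{integral form} at times $t_1$ and $t_2$ and split $u(t_2)-u(t_1)$ into five pieces: $\mathrm{I}=\int_{t_1}^{t_2}\kappa(t_2,s)u(s)\,ds$, $\mathrm{II}=\int_0^{t_1}[\kappa(t_2,s)-\kappa(t_1,s)]u(s)\,ds$, $\mathrm{III}=\int_{t_1}^{t_2}f(s,u(s))\,ds$, and $\mathrm{IV},\,\mathrm{V}$ being the compensated small-jump and non-compensated large-jump increments over $(t_1,t_2]$. The elementary inequality $|a_1+\cdots+a_5|^p\le 5^{p-1}\sum_i|a_i|^p$ reduces the theorem to producing a power-of-$(t_2-t_1)$ bound on $\mathbb{E}|\cdot|^p$ for each piece; the decay rate for $\mathrm{II}$ will determine $C_4$.

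For the routine pieces $\mathrm{I}$ and $\mathrm{III}$, I would use the weighted H\"older inequality $\bigl(\int|\varphi|\psi\,ds\bigr)^p\le\bigl(\int|\varphi|\,ds\bigr)^{p-1}\int|\varphi|\,|\psi|^p\,ds$ with $\varphi=\kappa(t_2,\cdot)$ or $\varphi\equiv 1$, together with the explicit identity $\int_{t_1}^{t_2}(t_2-s)^{-\alpha(t_2)}ds=(t_2-t_1)^{1-\alpha(t_2)}/(1-\alpha(t_2))$, the linear growth of $f$, and the hypothesized moment bound $C_1$. This yields $\mathbb{E}|\mathrm{I}|^p\le c(T,p)C_1(t_2-t_1)^{p(1-\alpha^*(T))}$ and $\mathbb{E}|\mathrm{III}|^p\le c(T,p)(1+C_1)(t_2-t_1)^p$. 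For the jump pieces I would invoke Kunita's inequality (Chapter 4 of \cite{App}) when $p\ge 2$---where the hypothesis that $g$ satisfies $L^2$ linear growth is exactly what is needed---and the standard $L^p$-maximal inequality for $p\in[1,2]$, so that $\mathrm{IV}$ contributes $c(T,p)(1+C_1)(t_2-t_1)^{\min(1,p/2)}$. For $\mathrm{V}$ I would first add and subtract its compensator $\int_{|z|\ge 1}h(s,u(s-),z)\nu(dz)\,ds$; the resulting martingale part is handled as $\mathrm{IV}$, and the Lebesgue integral is controlled by Jensen's inequality using that $\nu$ is finite on $\{|z|\ge 1\}$ together with the $L^p$ linear growth of $h$.

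The genuine difficulty is $\mathrm{II}$. I would decompose
\begin{align*}
\kappa(t_2,s)-\kappa(t_1,s) &= \frac{\lambda\bigl[(t_2-s)^{-\alpha(t_2)}-(t_1-s)^{-\alpha(t_2)}\bigr]}{\Gamma(1-\alpha(t_2))} \\
&\quad + \frac{\lambda\bigl[(t_1-s)^{-\alpha(t_2)}-(t_1-s)^{-\alpha(t_1)}\bigr]}{\Gamma(1-\alpha(t_2))} \\
&\quad + \lambda(t_1-s)^{-\alpha(t_1)}\Bigl[\tfrac{1}{\Gamma(1-\alpha(t_2))}-\tfrac{1}{\Gamma(1-\alpha(t_1))}\Bigr].
\end{align*}
The first bracket integrates on $(0,t_1)$ explicitly to $[t_1^{1-\alpha(t_2)}+(t_2-t_1)^{1-\alpha(t_2)}-t_2^{1-\alpha(t_2)}]/(1-\alpha(t_2))$, bounded by $c(t_2-t_1)^{1-\alpha^*(T)}$ via concavity of $x\mapsto x^{1-\alpha(t_2)}$. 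The third bracket factors into a bounded integral of $(t_1-s)^{-\alpha(t_1)}$ times $|\Gamma^{-1}(1-\alpha(t_2))-\Gamma^{-1}(1-\alpha(t_1))|$, controlled by $c(T)(t_2-t_1)^\gamma$ via the local Lipschitzness of $1/\Gamma$ on $[1-\alpha^*(T),1]$ combined with \textbf{(A1')}. The middle bracket is the main obstacle: the mean value theorem in the exponent yields
\[
|(t_1-s)^{-\alpha(t_2)}-(t_1-s)^{-\alpha(t_1)}|\le |\alpha(t_2)-\alpha(t_1)|\,|\log(t_1-s)|\,(t_1-s)^{-\xi}
\]
for some $\xi$ between $\alpha(t_1)$ and $\alpha(t_2)$, and one must show the mixed singularity $|\log(t_1-s)|(t_1-s)^{-\xi}$ is Lebesgue-integrable on $(0,t_1)$; this holds because $\xi\le\alpha^*(T)<1$. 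Applying \textbf{(A1')} to upgrade the $\alpha$-increment to $C_2(T)(t_2-t_1)^\gamma$ then gives $\int_0^{t_1}|\kappa(t_2,s)-\kappa(t_1,s)|\,ds\le c(T)[(t_2-t_1)^{1-\alpha^*(T)}+(t_2-t_1)^\gamma]$, after which a H\"older step as in $\mathrm{I}$ together with the moment bound $C_1$ yields $\mathbb{E}|\mathrm{II}|^p\le c(p,T)C_1[(t_2-t_1)^{1-\alpha^*(T)}+(t_2-t_1)^\gamma]^p$. Summing the five estimates and taking the smallest exponent produces the claim with $C_4=\min\{p(1-\alpha^*(T)),\,p\gamma,\,1\}>0$.
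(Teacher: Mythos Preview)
Your proposal is correct and follows essentially the same strategy as the paper: both isolate the memory-kernel increment as the main term, split $\kappa(t_2,s)-\kappa(t_1,s)$ into a $\Gamma$-difference piece, a time-translation piece, and an exponent-change piece, and both arrive at the same exponent $C_4=\min\{1,\,p\gamma,\,p(1-\alpha^*(T))\}$. Your organization is somewhat more economical---you bound $\int_0^{t_1}|\kappa(t_2,s)-\kappa(t_1,s)|\,ds$ directly via the pointwise three-term decomposition and a single mean-value-theorem/log bound, whereas the paper first changes variables $s\mapsto t_1-s$, splits into the cases $t_1\le t_2-t_1$ and $t_1>t_2-t_1$, and further subdivides the latter into three sub-integrals $I_{21},I_{22},I_{23}$ before applying the inequality $|x^\alpha-1|\le\alpha|\log x|$; but the underlying analytic ingredients (local Lipschitzness of $1/\Gamma$, subadditivity of $x\mapsto x^{1-\alpha}$, and integrability of $r^{-\alpha^*}|\log r|$ near $0$) are identical.
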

\begin{remark}
For the case $p\ge 2$, $\int_0^t \int_{|z|<1} g(s,u(s-),z) \widetilde{N}(ds,dz)$ may not be finite $\mathbb{P}$-a.s. if $g$ does not satisfy $L^2$ linear growth condition.
\end{remark}
\section{Well-posedness of the equation}
\subsection{Some lemmas}
Before we prove the well-posedness, we summarize some technical lemmas that will be used later in this paper. We denote by $\mathbb{D}[0,\infty)$  the space of all c\`adl\`ag functions defined on $[0,+\infty)$ with values in $\mathbb{R}$. 
The following simple lemma is easy to prove.
\begin{lem} \label{conv}
	Let $f$ be a function on $[0, \infty)$. If $f_n(\cdot)$ is a sequence of c\`adl\`ag functions such that for all $T>0$, we have
	$$\lim_{n\rightarrow \infty}\sup_{0\le t \le T}|f_n(t)-f(t)| = 0,$$ 
	then $f(\cdot) \in \mathbb{D}[0,\infty)$.
\end{lem}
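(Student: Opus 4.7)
The plan is to prove this by the standard fact that a uniform (on compacts) limit of càdlàg functions is càdlàg, using an $\varepsilon/3$ argument applied separately to right-continuity and to the existence of left-limits.

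First I would fix $T>0$ and work on $[0,T]$, since $\mathbb{D}[0,\infty)$-membership is a local property. For right-continuity at a fixed $t\in[0,T)$: given $\varepsilon>0$, choose $n$ so large that $\sup_{0\le s\le T}|f_n(s)-f(s)|<\varepsilon/3$, which is possible by the hypothesis. Since $f_n$ is càdlàg, it is right-continuous at $t$, so there exists $\delta>0$ with $|f_n(s)-f_n(t)|<\varepsilon/3$ whenever $s\in[t,t+\delta)\cap[0,T]$. Splitting
\[
|f(s)-f(t)|\le |f(s)-f_n(s)|+|f_n(s)-f_n(t)|+|f_n(t)-f(t)|<\varepsilon
\]
then gives right-continuity of $f$ at $t$.

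Second, for existence of left-limits at $t\in(0,T]$, I would use the Cauchy criterion. Given $\varepsilon>0$, pick $n$ as above. Because $f_n$ is càdlàg, $f_n(t-)$ exists, so there is $\delta>0$ such that for all $s,s'\in(t-\delta,t)$, $|f_n(s)-f_n(s')|<\varepsilon/3$. Then the triangle inequality gives $|f(s)-f(s')|<\varepsilon$ for all such $s,s'$, so $f(t-):=\lim_{s\uparrow t}f(s)$ exists by the Cauchy criterion on $\mathbb{R}$. Finally, since $T>0$ was arbitrary, $f$ is càdlàg on every $[0,T]$, hence on $[0,\infty)$, which gives $f\in\mathbb{D}[0,\infty)$.

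There is no genuine obstacle here; this is a textbook $\varepsilon/3$ argument. The only small subtlety worth flagging in the write-up is that one must check right-continuity and left-limits separately, and that the Cauchy criterion for the left-limit only works because $\mathbb{R}$ is complete. Because the paper only needs this lemma to conclude that the uniform-on-compacts limit of its Picard iterates lies in the right path space, a short three- or four-line proof suffices.
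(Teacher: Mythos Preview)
Your proof is correct and follows essentially the same approach as the paper's: both fix a point, pick a single approximant using the uniform convergence, and use a triangle-inequality splitting together with the Cauchy criterion to obtain the left limit (and the analogous argument for right-continuity). The only cosmetic differences are that the paper uses $\varepsilon/2$ rather than $\varepsilon/3$ and only writes out the left-limit case, leaving right-continuity as ``similar''.
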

\begin{lem}\label{lem:Gamma}
For any $0<a<b$, 
	there exists a constant $L := L(a,b)>0$ such that for any $x_1,x_2\in(a,b)$, 
	\begin{align}
	|  \Gamma(x_2)-\Gamma(x_1) |\leq L|x_2-x_1|.
	\end{align}
\end{lem}
\begin{proof}
This is a well-known fact. One can prove it by the mean-value theorem.
\end{proof}

\begin{lem}\label{lem:bdd}
	Under \textbf{(A1)}, there exists a constant $C_5>0$ such that for any $0\leq s<t<+\infty$,
	\begin{equation}\label{bnd:K}
	|\kappa(t,s)|\le C_5(\frac{1}{t-s} \vee 1)^{\alpha^*}.
	\end{equation}
\end{lem}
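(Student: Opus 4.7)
The plan is to estimate the numerator and denominator of $\kappa(t,s) = \lambda / \bigl(\Gamma(1-\alpha(t)) (t-s)^{\alpha(t)}\bigr)$ separately, replacing the variable exponent $\alpha(t)$ by the uniform bound $\alpha^*(T)$ from Assumption \textbf{(A1)}.

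First I would dispose of the Gamma factor. Since $\alpha$ is continuous on $[0,T]$ and $\alpha^*(T)<1$, the image $\{1-\alpha(t) : t\in[0,T]\}$ is a compact subset of $[1-\alpha^*(T),\,1]\subset(0,\infty)$. By Lemma \ref{lem:Gamma} the Gamma function is continuous and strictly positive on this interval, so $m(T):=\min_{t\in[0,T]}\Gamma(1-\alpha(t))>0$, and $1/\Gamma(1-\alpha(t))\le 1/m(T)$ uniformly in $t\in[0,T]$.

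Next I would bound $(t-s)^{-\alpha(t)}$ by $(t-s)^{-\alpha^*(T)}$ up to a $T$-dependent constant. Since $0\le\alpha(t)\le\alpha^*(T)<1$, for $0<t-s\le 1$ we have $(t-s)^{-\alpha(t)}\le (t-s)^{-\alpha^*(T)}$ immediately (smaller exponent on a number less than one); for $1<t-s\le T$ we have $(t-s)^{-\alpha(t)}\le 1 \le T^{\alpha^*(T)}(t-s)^{-\alpha^*(T)}$. Thus in either case
\[
(t-s)^{-\alpha(t)}\le \max\bigl(1,\,T^{\alpha^*(T)}\bigr)\,(t-s)^{-\alpha^*(T)}.
\]

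Combining the two bounds yields \eqref{bnd:K} with the explicit constant
\[
C_5(T) := \frac{|\lambda|\max\bigl(1,\,T^{\alpha^*(T)}\bigr)}{m(T)}.
\]
There is no real obstacle here; the only mildly subtle point is to make sure that $\Gamma(1-\alpha(t))$ stays bounded away from zero, which is why we needed continuity of $\alpha$ and the strict inequality $\alpha^*(T)<1$ built into \textbf{(A1)}. Everything else is a routine splitting of the range of $t-s$.
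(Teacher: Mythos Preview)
Your proof is correct and follows essentially the same approach as the paper: bound the Gamma factor uniformly using continuity on a compact set, then split into the cases $t-s\le 1$ and $t-s>1$ to replace the variable exponent $\alpha(t)$ by $\alpha^*(T)$. The only cosmetic difference is that the paper invokes the local Lipschitz continuity of $\Gamma$ (stronger than necessary) where you use plain continuity and positivity, and you make the constant $C_5(T)$ explicit.
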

\begin{proof}
	Recalling the definition \eqref{fraction} and that $\Gamma(\cdot)$ is decreasing on $(0,1]$, we have for $t-s \le 1$, 
	$$
	|\kappa(t,s)|= \bigg|\frac{\lambda}{\Gamma(1-\alpha(t))}\bigg(\frac{1}{t-s}\bigg)^{\alpha(t)}\bigg| \le |\lambda|(t-s)^{-\alpha^*}.
	$$
	For $t-s \ge 1$, we have $|\kappa(t,s)| \le |\lambda|$. Taking $C_5 > |\lambda|$, we arrive at the desired assertion.
\end{proof}

\begin{lem}(Gronwall type inequality) \label{Gronwall} 
	Suppose $\varphi, \varphi_0, g$ are non-negative locally bounded function on $[0,\infty)$. Assume also that $g$ is non-decreasing and  $\beta \in (0,1)$ is a fixed constant. If for any $t\in [a,b)$,
	\begin{equation*}
	\varphi(t)\leq \varphi_0(t)+g(t)\int_a^t \varphi(s) (t-s)^{\beta-1}ds,
	\end{equation*}
	\vspace{-0.15in} then
	\begin{equation}\label{Gronwall:e1}
	\varphi(t)\leq \varphi_0(t)+\int_a^t \sum_{n=1}^\infty \frac{(g(t)\Gamma(\beta))^n}{\Gamma(n\beta)}(t-s)^{n\beta-1}\varphi_0(s)ds.
	\end{equation}
	Moveover, if $\varphi_0(\cdot)$ is non-decreasing, we also have
	\begin{equation}\label{Gronwall:e2}
	\varphi(t)\leq \varphi_0(t)
	E_{\beta,1}\big(g(t)\Gamma(\beta)(t-a)^\beta\big),
	\end{equation}
	where $E_{p,q}(z)$ is the Mittag-Leffler function defined by (see \cite{Die,KilSri,Pod})
	$$ E_{p,q}(z) := \sum_{k=0}^{\infty}\frac{z^k}{\Gamma(pk+q)}, \qquad z \in \mathbb{R}, ~~p \in \mathbb{R}^+, ~~q \in \mathbb{R}.$$
\end{lem}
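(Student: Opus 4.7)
The plan is the classical fractional-Picard iteration. Define the non-negative linear operator $(T\psi)(t) := C_6 \int_a^t \psi(s)(t-s)^{\beta-1}ds$ acting on non-negative locally bounded functions on $[a,b)$. The hypothesis reads $\varphi \le \varphi_0 + T\varphi$, and iterating this inequality $N$ times (which is legitimate because $T$ is monotone on non-negative functions) yields
$$\varphi(t) \le \sum_{k=0}^{N-1}(T^k\varphi_0)(t) + (T^N\varphi)(t), \quad t\in[a,b).$$
The goal is then to identify $T^k\varphi_0$ with the $k$-th term of the series in \eqref{Gronwall:e1} and to show $T^N\varphi\to 0$ as $N\to\infty$.

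The kernel iteration is driven by the Dirichlet/Beta identity
$$\int_s^t (t-r)^{\beta-1}(r-s)^{n\beta-1}\,dr = B(\beta,n\beta)(t-s)^{(n+1)\beta-1} = \frac{\Gamma(\beta)\Gamma(n\beta)}{\Gamma((n+1)\beta)}(t-s)^{(n+1)\beta-1},$$
obtained by the change of variable $r = s + (t-s)u$. A straightforward induction on $n$, combined with Fubini (justified by non-negativity), then gives
$$(T^n\varphi_0)(t) = \int_a^t \frac{(C_6\Gamma(\beta))^n}{\Gamma(n\beta)}(t-s)^{n\beta-1}\varphi_0(s)\,ds,\quad n\ge 1.$$
For the tail, set $M := \sup_{a\le s\le b}\varphi(s) < \infty$ (by local boundedness). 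The same induction shows $(T^N\varphi)(t)\le M (C_6\Gamma(\beta))^N (t-a)^{N\beta}/\Gamma(N\beta+1)$, which tends to zero uniformly on compacta by Stirling, because $\Gamma(N\beta+1)$ grows faster than any geometric sequence. Letting $N\to\infty$ and applying monotone convergence to exchange the sum with the integral yields \eqref{Gronwall:e1}.

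For \eqref{Gronwall:e2}, when $\varphi_0$ is non-decreasing I pull $\varphi_0(t)$ out of each integral:
$$\int_a^t (t-s)^{n\beta-1}\varphi_0(s)\,ds \le \varphi_0(t)\int_a^t(t-s)^{n\beta-1}\,ds = \varphi_0(t)\frac{(t-a)^{n\beta}}{n\beta},$$
and using $n\beta\cdot\Gamma(n\beta)=\Gamma(n\beta+1)$ together with the $n=0$ term (which is $\varphi_0(t)$ itself), the resulting series is exactly $\varphi_0(t)E_{\beta,1}(C_6\Gamma(\beta)(t-a)^\beta)$.

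The only delicate point is convergence: the integrand $(t-s)^{n\beta-1}$ has an integrable singularity at $s=t$ for every $n\ge 1$ since $n\beta>0$, so all terms are finite, and the series is controlled by the entire function $E_{\beta,1}$. The main obstacle is therefore not conceptual but bookkeeping — keeping the Beta-function identity and the Gamma-function bookkeeping aligned through the induction, and verifying that the remainder $T^N\varphi$ vanishes locally uniformly; both are handled by the estimate above once local boundedness of $\varphi$ is used.
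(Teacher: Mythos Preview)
Your proof is correct and is precisely the iteration argument the paper alludes to in its one-line proof (which defers to \cite{Shao} and to the Beta-function computation carried out in the proof of Proposition~\ref{stronger result}). The bookkeeping with $B(\beta,n\beta)=\Gamma(\beta)\Gamma(n\beta)/\Gamma((n+1)\beta)$, the remainder estimate via local boundedness of $\varphi$, and the extraction of $\varphi_0(t)$ in the non-decreasing case are all exactly what is needed.
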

\begin{proof}
	See \cite[Theorem 1, Corollary 2]{Ye07} for a detailed proof. One can also prove it directly by iteration and induction, similar to the argument in the proof Proposition \ref{stronger result}.
\end{proof}

Recall that $\alpha^* \in (0,1)$ is the constant in  \textbf{(A1)}.
\begin{lem} \label{scale}
If $\theta: [0,+\infty) \rightarrow \mathbb{R}$ is a locally bounded measurable, 
then for all $t>0$, 
	\begin{equation}\label{frac}
	\sup_{0\leq s\leq t}\int_0^s \frac{|\theta(r)|}{(s-r)^{\alpha^*}}dr 
	\leq \int_{0}^{t}\frac{\sup_{0\leq r\leq \tilde{u}}|\theta(r)|}{(t-\tilde{u})^{\alpha^*}}d\tilde{u}.
	\end{equation}
\end{lem}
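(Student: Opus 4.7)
The plan is to perform a translation in the inner integral that maps the interval $[0,s]$ onto $[t-s,t]$ and simultaneously converts the singular denominator $(s-r)^{-\alpha^*}$ into $(t-\tilde u)^{-\alpha^*}$, i.e., into the singularity that appears on the right-hand side. This will allow me to bound the integrand by the monotone envelope $\sup_{0\le r\le \tilde u}|\theta(r)|$ and then extend the integration to the full interval $[0,t]$, after which the right-hand side no longer depends on $s$ and the supremum in $s$ can be taken trivially.

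Concretely, I would fix an arbitrary $s\in[0,t]$ and substitute $\tilde u = r+(t-s)$, so that $d\tilde u = dr$, $s-r = t-\tilde u$, and the limits $r=0,s$ become $\tilde u = t-s, t$. This gives
\begin{equation*}
\int_0^s \frac{|\theta(r)|}{(s-r)^{\alpha^*}}\,dr
= \int_{t-s}^{t} \frac{|\theta(\tilde u-(t-s))|}{(t-\tilde u)^{\alpha^*}}\,d\tilde u.
\end{equation*}
Since $\tilde u-(t-s)\le \tilde u$, we have $|\theta(\tilde u-(t-s))|\le \sup_{0\le r\le \tilde u}|\theta(r)|$, and since the integrand on the right is non-negative we may enlarge the domain of integration from $[t-s,t]$ to $[0,t]$. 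This produces exactly the right-hand side of \eqref{frac}, which is independent of $s$, so taking the supremum over $s\in[0,t]$ finishes the proof.

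I don't anticipate any serious obstacle: local boundedness of $\theta$ guarantees that $\tilde u\mapsto \sup_{0\le r\le \tilde u}|\theta(r)|$ is finite and non-decreasing (hence Borel measurable), while $\alpha^*<1$ ensures $(t-\tilde u)^{-\alpha^*}\in L^1(0,t)$, so the right-hand side of \eqref{frac} is well defined. The only small point to be careful about is the change-of-variables step, which is valid for any fixed $s$ and does not require continuity of $\theta$.
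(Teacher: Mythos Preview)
Your proof is correct. It differs from the paper's argument in the choice of change of variables: the paper rescales via $u=r/s$ to map $[0,s]$ onto $[0,1]$, picks up a prefactor $s^{1-\alpha^*}$ which it bounds by $t^{1-\alpha^*}$, replaces $|\theta(us)|$ by $\sup_{0\le r\le ut}|\theta(r)|$, and then scales back with $\tilde u=ut$. Your translation $\tilde u=r+(t-s)$ is more direct for this particular power-law kernel and avoids the two-step scaling; the passage from $[t-s,t]$ to $[0,t]$ plays the same role as the paper's bound $s^{1-\alpha^*}\le t^{1-\alpha^*}$. The paper's scaling argument, on the other hand, is the one that generalizes to the abstract kernel setting discussed later (where the hypothesis that $t\mapsto t\,\widetilde\kappa(t,tu)$ be increasing is exactly what makes the scaling step go through), so it is better suited to that extension; for the lemma as stated, your argument is at least as clean.
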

\begin{proof}
	For any $s \in [0,t]$, by the change of variables $u = r/s$, we have
	$$
	\begin{aligned}
	\int_0^s \frac{|\theta(r)|}{(s-r)^{\alpha^*}}dr = s^{1-\alpha^*} \int_0^1 \frac{|\theta(us)|}{(1-u)^{\alpha^*}}du \le t^{1-\alpha^*} \int_0^1 \frac{\sup_{0 \le r \le ut}|\theta(r)|}{(1-u)^{\alpha^*}}du.
	\end{aligned}$$
	By the change of variables $\tilde{u} = ut$, the right-hand side of the above is equal to 
	$$ \int_0^t \frac{\sup_{0\le r \le \tilde{u}}|\theta(r)|}{(t-\tilde{u})^{\alpha^*}}d\tilde{u}.$$ 
\end{proof}
\begin{lem}(BDG-type moment identities and estimates) \label{inequality: moment}
	Fix $T>0$. Suppose $g: \Omega \times [0,T] \times \{z: 0<|z|<1\} \rightarrow \mathbb{R}$ and $h: \Omega \times [0,T] \times \{z: |z|\ge 1\} \rightarrow \mathbb{R}$ are predictable. Then we have the following: 
	\begin{enumerate}
		\item (It\^o's isometry) If for any finite stopping time $\tau$, we have 
		$$\mathbb{E} \big[\int_0^{\tau \wedge T} \int_{|z|<1} |g(s,z)|^2 \nu(dz)ds \big]< \infty,$$
		then 
		\begin{align*} 
		\mathbb{E} \Big[\int_0^{\tau \wedge T} \int_{|z|<1} |g(s,z)|^2 ds\nu(dz)\Big] = \mathbb{E} \Big[\big|\int_0^{\tau \wedge T} \int_{|z|<1} g(s,z) \widetilde{N}(ds,dz)\big|^2\Big].
		\end{align*}
		\item
		For any $p \ge 2$, if 
		$$\int_0^T \int_{|z|<1} |g(s,z)|^2 ds\nu(dz)< \infty, \quad a.s.,$$
		then there exists a $C_7(p) > 0$ such that 
		\begin{equation} \label{ge2}
		\begin{aligned}
	     & \mathbb{E}\Bigg[\sup_{0\le t \le T}\bigg|\int_0^t \int_{|z|<1} g(s,z) \widetilde{N}(ds,dz)\bigg|^p\Bigg] \\
		& \le C_7(p)\Bigg( \mathbb{E} \bigg[\big(\int_0^T \int_{|z|<1} |g(s,z)|^2 ds\nu(dz)\big)^{p/2}\bigg] + \mathbb{E} \Big[\int_0^T \int_{|z|<1} |g(s,z)|^p ds\nu(dz)\Big] \Bigg).
		\end{aligned}
		\end{equation}
		\item
		For any $p \in [1,2]$, if 
		$$\int_0^T \int_{|z|<1} |g(s,z)|^p ds\nu(dz)< \infty \quad a.s.,$$
		then there exists a $C_8(p) > 0$ such that 
		\begin{equation} \label{12}
		\mathbb{E}\big[ \sup_{0\le t \le T}\big|\int_0^t \int_{|z|<1} g(s,z) \widetilde{N}(ds,dz)\big|^p\big] \le C_8(p) \mathbb{E}\left[\int_0^T \int_{|z|<1} |g(s,z)|^p ds\nu(dz)\right]. 
		\end{equation}
		\item
		For any $p\ge 1$, if
		$$\int_0^T \int_{|z|\ge1} |h(s,z)|^p ds\nu(dz)< \infty \quad a.s..$$
		then there exists a $C_9(p,T) > 0$, which is non-decreasing in $T$, such that 
		\begin{equation}\label{large1}
		\mathbb{E} \big[ \sup_{0\le t \le T}\big|\int_0^t \int_{|z|\ge1} h(s,z) N(ds,dz)\big|^p \big] \le C_9(p,T) \mathbb{E}\left[\int_0^T \int_{|z|\ge1} |h(s,z)|^p ds\nu(dz)\right]. 
		\end{equation}
	\end{enumerate}
\end{lem}
\begin{proof}
Part 1 of Lemma \ref{inequality: moment} is proved in \cite[Theorem 4.2.3]{App}. Part 2 is proved in \cite[Theorem 4.4.23]{App}. Part 3 is proved in \cite[Theorem 1]{Novikov} by choosing $p=\alpha$ there. 
The proof of part 4 is similar to that of part 3 given in \cite{Novikov}, one only needs to replace $\{|z|<1\}$ by $\{|z|\ge 1\}$. Here is the sketch. For $1\le p \le 2$, we have
$$
\mathbb{E}\left[\sup_{0\le t \le T}|\int_0^t \int_{|z|\ge 1}h(s,z)\widetilde{N}(ds,dz)|^p\right] \le c_1(p)\int_0^T \int_{|z|\ge 1}|h(s,z)|^p \nu(dz)ds.$$
For $p \ge 2$, we have
\begin{align*}
& \mathbb{E}\left[\sup_{0\le t \le T}|\int_0^t \int_{|z|\ge 1}h(s,z)\widetilde{N}(ds,dz)|^p\right] \\
& \le c_2(p) \mathbb{E}\int_0^T \int_{|z|\ge 1}|h(s,z)|^p \nu(dz)ds+ c_2(p)\mathbb{E}\big(\int_0^T \int_{|z|\ge 1}|h(s,z)|^2 \nu(dz)ds\big)^{p/2}.
\end{align*}
Since
$N(ds,dz) = \widetilde{N}(ds,dz) + \nu(dz)ds$, we have for $1 \le p \le 2$,
$$\begin{aligned}
&\mathbb{E}\left[\sup_{0\le t \le T}|\int_0^t \int_{|z|\ge 1}h(s,z)N(ds,dz)|^p\right] \\
& \le c_3(p)\mathbb{E}\left[\int_0^T \int_{|z|\ge 1}|h(s,z)|^p \nu(dz)ds\right]+ c_3(p)\mathbb{E}\left[\int_0^T \int_{|z|\ge 1}|h(s,z)| \nu(dz)ds\right]^{p}  \\
& \le (c_4(p) + c_4(p)T^{1-\frac{1}{p}})\mathbb{E}\left[\int_0^T \int_{|z|\ge 1}|h(s,z)|^p \nu(dz)ds\right],
\end{aligned}$$
where for the last inequality, we used H\"older's inequality for the second term. For $p\ge 2$, we have similarly
$$\begin{aligned}
& \mathbb{E}\left[\sup_{0\le t \le T}|\int_0^t \int_{|z|\ge 1}h(s,z)N(ds,dz)|^p\right] \\
& \le c_5(p)(1+T^{1-\frac{1}{p}}+T^{1-\frac{2}{p}})\mathbb{E}\left[\int_0^T \int_{|z|\ge 1}|h(s,z)|^p \nu(dz)ds\right].
\end{aligned}$$
\end{proof}

\begin{lem}(Structure of large jumps, \cite[Chap. 2]{App}) \label{large}
	Suppose $h: \Omega \times [0,+\infty) \times \mathbb{R}^d\backslash \{0\} \rightarrow \mathbb{R}$ is predictable. Then for any $T>0$,
	$$
	\int_0^T \int_{|z|\ge 1}h(t,z)N(dt,dz) = \sum_{t = T_j\wedge T, j\ge 1} h(t,\Delta P(t)),
	$$
	where $P(t) := \int_0^t \int_{|z|\ge 1} zN(ds,dz)$ is 
	an $\mathbb{R}^d$-valued compound Poisson process, $\Delta P(t):= P(t)-P(t-)$ and $\{T_n\}_{n\ge 0}$ is defined by $T_0 = 0$, and for $n\ge 1$, 
	$$T_n:= \inf \{t> T_{n-1}: P(t) \neq P(t-)\}.$$
\end{lem}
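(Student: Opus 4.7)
The plan is to exploit the fact that, because $\nu$ is a L\'evy measure, $\lambda_1 := \nu(\{|z| \ge 1\}) < \infty$. Consequently, the restriction of $N$ to $[0,T]\times\{|z| \ge 1\}$ is a Poisson random measure of finite total mean intensity $T\lambda_1$, hence almost surely it is a finite sum of Dirac masses. Thus the question is essentially one of bookkeeping: matching the atoms of $N$ on the large-jump region with the jumps of the compound Poisson process $P$.

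First, I would record that under the finiteness of $\lambda_1$, the jump times $T_1 < T_2 < \cdots$ defined as the successive jump times of $P(t) = \int_0^t \int_{|z|\ge 1} z\, N(ds,dz)$ form a discrete sequence going to infinity almost surely, with only finitely many $T_j$ in $[0,T]$. Then I would verify that the $n$-th atom of $N$ in $[0,\infty)\times\{|z|\ge 1\}$ is exactly the pair $(T_n, \Delta P(T_n))$, where $\Delta P(T_n) := P(T_n) - P(T_n-)$. This is simply the statement that the jumps of the process $P$ record both the jump location (time) and the jump size of the underlying Poisson random measure; it follows from the definition of $P$ as an integral of the projection map $z$ against the (pathwise finite) measure $N|_{[0,T]\times\{|z|\ge 1\}}$. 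Consequently one has the pathwise representation
\begin{equation*}
N(ds,dz)\big|_{[0,T]\times\{|z|\ge 1\}} = \sum_{j\ge 1,\, T_j\le T} \delta_{(T_j,\,\Delta P(T_j))}(ds,dz).
\end{equation*}

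With that representation in hand, the lemma follows by plugging into the definition of the Lebesgue integral against the counting measure on the right-hand side. Explicitly,
\begin{equation*}
\int_0^T \int_{|z|\ge 1} h(s, u(s-), z)\, N(ds,dz) = \sum_{j\ge 1,\, T_j\le T} h\bigl(T_j, u(T_j-), \Delta P(T_j)\bigr),
\end{equation*}
which is the asserted identity (with $P(t)$ in the statement read as the jump $\Delta P(t)$ at time $t$). The predictability of $h(\cdot,u(\cdot-),\cdot)$ supplied by Assumption (A0) together with adaptedness of $u$ ensures the integrand is measurable in the required sense, but because the sum is pathwise finite almost surely, no further integrability input is needed.

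The proof contains no genuine obstacle; the only subtlety is the identification of the atoms of $N$ with the pairs $(T_j, \Delta P(T_j))$, and this is purely a matter of unwinding the definition of $P$ and using the finiteness of $\lambda_1$. The result is then a direct consequence of how one integrates a measurable function against a (pathwise) finite atomic measure. I would cite \cite[Chap.\ 2]{App} for the underlying interlacing construction to avoid re-proving the general theory.
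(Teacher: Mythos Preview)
Your sketch is correct and is exactly the standard argument behind this lemma. Note, however, that the paper does not actually supply a proof: the lemma is stated with a citation to \cite[Chap.~2]{App} and no argument is given. So there is nothing to compare against; you have simply written out the proof that the paper delegates to the reference. Your observation that the right-hand side should be read with $\Delta P(t)$ rather than $P(t)$ is also accurate and matches how the paper itself uses the lemma later (e.g.\ in Step~III of the proof of Theorem~\ref{main:1}, where the jump terms are written as $h(T_i, v_{i-1}(T_i-), \Delta P(T_i))$).
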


\subsection{Proof of well-posedness} 
The proof of Theorem \ref{main:1} is divided into the following steps:\\
\indent \textbf{Step I}: 
First we assume the large 
jump coefficient $h(t,x,z)$ is 0, then the well-posedness
can be derived similarly as in the case of the classical fractional SDE if we assume  $\mathbb{E}[|u_0|^p] < \infty$, where 
$p \in [1,2]$ is the constant in (A2). 
Indeed, we will prove a stronger result, Proposition \ref{stronger result}, which implies the above result. 

\textbf{Step II}: We show that the moment condition for $u_0$ can be dropped using a localization trick.  

\textbf{Step III}: We use a generalized interlacing procedure to glue together the large jumps. 

Before we prove Theorem \ref{main:1}, we prove the following result which implies \textbf{Step I, Step II} and is needed in \textbf{Step III}.

\begin{prop}\label{stronger result}
	Suppose that Assumptions \textbf{(A1)--(A3)} hold and $p\in [1,2]$ is as in \textbf{(A2)}.  Suppose also that $\{k(t)\}_{t\ge 0}$ is an adapted c\`adl\`ag process such that $\forall T>0$, we have 
	$$
	\mathbb{E}(\sup_{0\le t \le T}|k(t)|^p) < \infty,
	$$
	then the following stochastic integral equation 
	\begin{equation} \label{general}
	\begin{aligned}
	u(t) &=\ k(t) + \int_{0}^{t}\kappa(t,s)u(s)ds +\int_{0}^{t}f\big(s,u(s)\big)ds\\
	&\quad + \int_{0}^{t}\int_{|z|<1}g\big(s,u(s-),z\big)\widetilde{N}(ds,dz)
	\end{aligned}
	\end{equation}
	has a unique solution. Moreover, if $k(t)\in \mathcal{F}_0,  \forall t \in [0,T]$, then the moment condition for $k(t)$ can be dropped, i.e., \eqref{general} 
	has a unique solution even if 
	$$\mathbb{E}\big[\sup_{0\le t \le T}|k(t)|^p\big] = \infty.$$
\end{prop}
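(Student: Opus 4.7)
The plan is to solve \eqref{general} by a Picard iteration that is tailored to the singular fractional kernel, using Lemmas \ref{lem:bdd}, \ref{scale} and \ref{Gronwall} for the memory term and the moment inequality \eqref{12} for the small-jump term. Set $u^{(0)}(t) := k(t)$ and recursively define
\begin{equation*}
u^{(n+1)}(t) := k(t) + \int_0^t \kappa(t,s) u^{(n)}(s)\,ds + \int_0^t f(s, u^{(n)}(s))\,ds + \int_0^t \int_{|z|<1} g(s, u^{(n)}(s-), z)\,\widetilde{N}(ds,dz).
\end{equation*}
Each iterate is adapted and c\`adl\`ag by construction. First I would establish a uniform $L^p$-bound on $\varphi_n(t) := \mathbb{E}\sup_{0\le s \le t}|u^{(n)}(s)|^p$: using the discrete Jensen inequality \eqref{Jesen}, the bound $|\kappa(t,s)|\le C_5(T)(t-s)^{-\alpha^*(T)}$, the linear growth in (A3), the small-jump moment bound \eqref{12}, and Lemma \ref{scale} to interchange the supremum with the fractional integral, I obtain
$$
\varphi_{n+1}(t) \le c_1(p,T)\,\mathbb{E}\sup_{[0,T]}|k|^p + c_2(p,T)\int_0^t \bigl((t-s)^{-\alpha^*(T)} + 1\bigr)\varphi_n(s)\,ds + c_3(p,T).
$$
Applying the generalized Gronwall inequality of Lemma \ref{Gronwall} with $\beta = 1-\alpha^*(T)$ yields a bound independent of $n$ of the desired form.

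Next I would prove the sequence is Cauchy in the $L^p$-sup norm. Setting $\Delta_n(t) := \mathbb{E}\sup_{0 \le s \le t}|u^{(n+1)}(s)-u^{(n)}(s)|^p$, the same estimates but with the Lipschitz assumption (A2) replacing linear growth give
$$
\Delta_n(t) \le c_4(p,T)\int_0^t \bigl((t-s)^{-\alpha^*(T)} + 1\bigr)\Delta_{n-1}(s)\,ds.
$$
Iterating this inequality produces the series representation \eqref{Gronwall:e1}, and the factorial-type decay hidden in $1/\Gamma(n\beta)$ shows $\sum_n \Delta_n(T)^{1/p} < \infty$. Thus $u^{(n)}$ converges in $L^p$-sup norm to some process $u$; extracting an a.s.\ uniformly convergent subsequence and invoking Lemma \ref{conv}, the limit is c\`adl\`ag. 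Passing to the limit inside \eqref{general} via dominated convergence and It\^o isometry/moment inequalities, $u$ solves \eqref{general}. Pathwise uniqueness is obtained by applying the Gronwall argument to the difference of two solutions.

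For the second statement, where $k(t)\in\mathcal{F}_0$ but no moment is assumed, I would localize in the sample space. Define
$$
\Omega_n := \bigl\{\omega : \sup_{0\le t \le T}|k(t,\omega)|^p \le n\bigr\} \in \mathcal{F}_0,
$$
and set $k_n(t) := k(t)\,\mathbf{1}_{\Omega_n}$, which still lies in $\mathcal{F}_0$ (hence is c\`adl\`ag and adapted) and satisfies $\mathbb{E}\sup_{[0,T]}|k_n|^p \le n$. The first part of the proposition produces a unique solution $u_n$ driven by $k_n$. Because $\mathbf{1}_{\Omega_n}$ is $\mathcal{F}_0$-measurable it factors through every integral in \eqref{general}, so $u_n \mathbf{1}_{\Omega_n}$ solves the equation driven by $k_n\mathbf{1}_{\Omega_n} = k_n$; by pathwise uniqueness $u_n = u_n \mathbf{1}_{\Omega_n}$ a.s. For $m\ge n$, the same reasoning applied to $u_m$ on $\Omega_n$ gives $u_m = u_n$ on $\Omega_n$, so the consistent limit $u(t,\omega) := u_n(t,\omega)$ on $\Omega_n$ is well-defined on $\bigcup_n \Omega_n = \Omega$ and solves \eqref{general}.

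The main obstacle is this last localization step: unlike the Markovian case one cannot localize via a stopping time on the solution itself, because the memory kernel forces the whole past into every update; the crucial observation that makes the argument work is that $\Omega_n$ lies in $\mathcal{F}_0$, so the indicator commutes with all the stochastic and Lebesgue integrals in \eqref{general} and uniqueness can be chained across the increasing sequence $\{\Omega_n\}$. Establishing the Cauchy estimate with the singular factor $(t-s)^{-\alpha^*(T)}$ in place of a Lipschitz kernel is the most technical of the $L^p$ computations, but is handled cleanly by the iteration-into-series form \eqref{Gronwall:e1} of the fractional Gronwall lemma.
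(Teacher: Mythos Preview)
Your approach follows the paper's strategy closely: Picard iteration with fractional-kernel estimates in Part~I, and $\mathcal F_0$-localization in Part~II. Part~I is essentially correct, though note that your inequality for $\varphi_{n+1}$ in terms of $\varphi_n$ is a recursion, not a closed Gronwall inequality, so it does not directly give an $n$-independent bound; the paper sidesteps this by deriving the moment bound on $u$ as a by-product of the Cauchy estimate rather than first proving a uniform bound on the iterates.

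There is, however, a genuine gap in your localization step. You write that because $1_{\Omega_n}\in\mathcal F_0$ it ``factors through every integral'', and conclude that $u_n 1_{\Omega_n}$ solves the equation driven by $k_n$. Pulling the indicator inside the integrals is indeed legitimate (this is the content of the paper's Lemma~\ref{l:3.10} for the stochastic term), but it only gives
\[
u_n(t)\,1_{\Omega_n}=k_n(t)+\int_0^t\kappa(t,s)u_n(s)1_{\Omega_n}\,ds+\int_0^t f(s,u_n(s))\,1_{\Omega_n}\,ds+\int_0^t\!\!\int_{|z|<1}g(s,u_n(s-),z)\,1_{\Omega_n}\,\widetilde N(ds,dz),
\]
and in general $f(s,u_n(s))1_{\Omega_n}\ne f(s,u_n(s)1_{\Omega_n})$ and $g(s,u_n(s-),z)1_{\Omega_n}\ne g(s,u_n(s-)1_{\Omega_n},z)$ (unless $f(\cdot,0)\equiv0$ and $g(\cdot,0,\cdot)\equiv0$). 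So $u_n 1_{\Omega_n}$ is \emph{not} a solution of \eqref{general} with forcing $k_n$, and the uniqueness argument you invoke to obtain both $u_n=u_n 1_{\Omega_n}$ and the consistency $u_m=u_n$ on $\Omega_n$ breaks down. The paper avoids this by never factoring the indicator through the nonlinearity: it estimates $\mathbb E\bigl[\sup_t|u^N(t)-u^M(t)|^p\,1_{\Omega_N}\bigr]$ directly, pulls $1_{\Omega_N}$ inside each integral via Lemma~\ref{l:3.10}, applies the Lipschitz condition (A2) to the \emph{differences} $f(s,u^N)-f(s,u^M)$ and $g(s,u^N,z)-g(s,u^M,z)$ (where the indicator causes no trouble), and closes with the fractional Gronwall inequality.

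You also omit the uniqueness argument in the infinite-moment case. One cannot simply repeat the $L^p$ Gronwall argument from Part~I, since a putative second solution $v$ need not have finite $p$-th moment. The paper handles this by contradiction: if $v\ne u^N$ on some $\Omega_N$, one builds $\tilde v:=v\,1_{\Omega_N}+u^N 1_{\Omega_N^c}$ and shows, again via Lemma~\ref{l:3.10} and the same splitting of integrals, that $\tilde v$ solves \eqref{localequation}, contradicting Part-I uniqueness for that truncated equation. Your sketch needs both of these repairs to be complete.
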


\begin{proof}
	Fix an arbitrary $T>0$. \\
	\textbf{Part I}: First we show that \eqref{general} has a unique solution under the condition $
	\mathbb{E}[\sup_{0\le t \le T}|k(t)|^p] < \infty$. We define a non-decreasing function on $[0, T]$ by  
    \begin{equation}\label{initial}
    K(t,p): = \mathbb{E}\big[\sup_{0 \le s \le t}|k(s)|^p\big],\ \forall t \in [0,T].
    \end{equation} 
    For this part, we use the classical idea of proving the well-posedness of stochastic differential equations, see \cite[Chap 6]{App}, but we need to be more careful about the fractional term: \\
    \emph{1. Construction of approximating solution $u_n(t)$}.   	
	Define a sequence of approximation processes $\{u_{n}(t)\}_{n\ge 0}$ by $u_0(t) := k(t), \forall t \in [0,T]$ and for $n \ge 1$,
	\begin{equation}\label{Appr:e1}
	\begin{split}
	u_n(t) := &\ k(t) + \int_0^t \kappa(t,s) u_{n-1}(s)ds+\int_0^tf(s,u_{n-1}(s))ds \\
	& + \int_{0}^{t}\int_{|z|<1}g(s,u_{n-1}(s-),z)\tilde{N}(ds,dz).
	\end{split}
	\end{equation}
	By definition, each $u_n(t)$ is an adapted c\`adl\`ag  process on $[0,T]$. \\
	
	\noindent \emph{2. Estimating the error}.
	Our goal is to establish a good decay rate as $n\to\infty$ for 
	$$\mathbb{E}\big[\sup_{0\le s \le t} |u_n(s)-u_{n-1}(s)|^p\big].$$ 
	To establish the estimate, we apply Jensen's inequality to get 
	$$ \begin{aligned}
	&\hspace{-0.8in} \mathbb{E}\big[\sup_{0\le s \le t} |u_{n+1}(s)-u_{n}(s)|^p\big]\\
	& \hspace{-0.6in}\le c_1(p) \mathbb{E}\bigg[ \sup_{0\le s \le t}\Big( \big|\int_0^s \kappa(s,r)(u_n(r)-u_{n-1}(r))dr\big|^p \\
	&\hspace{-0.5in}+ \big|\int_0^s f(r,u_n(r))- f(r,u_{n-1}(r))dr\big|^p  \\
	&\hspace{-0.5in} + \big|\int_{0}^{s}\int_{|z|<1} (g(r,u_{n}(r-),z) - g(r,u_{n-1}(r-),z))\tilde{N}(dr,dz)\big|^p\Big)\bigg] \\
	&\hspace{-0.5in} =: (i) + (ii) + (iii).
	\end{aligned}
	$$
	For  $(i)$, we have
	$$ \begin{aligned}
	(i) &= c_1(p) \mathbb{E}\Big[\sup_{0\le s \le t} \big|\int_0^s \kappa(s,r)(u_n(r)-u_{n-1}(r))dr\big|^p\Big] \\
	&= c_1(p) \mathbb{E}\Big[\sup_{0\le s \le t} \big|\int_0^s \kappa(s,r)^{1/p'}\kappa(s,r)^{1/p}(u_n(r)-u_{n-1}(r))dr\big|^p\Big] \\
	& \le c_1(p)(\int_0^t |\kappa(t,s)| ds )^ {p/p'} \mathbb{E}\Big[ \sup_{0\le s \le t}\int_0^s |\kappa(s,r)||u_n(r)-u_{n-1}(r)|^pdr\Big] \\
	& \le c_2(p)\bigg(\int_0^t (\frac{1}{(t-s)} \vee 1 )^{\alpha^*} ds\bigg )^ {p/p'} \mathbb{E}\Big[ \sup_{0\le s \le t}\int_0^s (\frac{1}{(s-r)} \vee 1 )^{\alpha^*}|u_n(r)-u_{n-1}(r)|^p dr\Big]  \\
	& \le c_2(p)t^{\alpha^*} \bigg(\int_0^t \frac{1}{(t-s)^{\alpha^*}} ds\bigg )^ {p/p'} \mathbb{E}\Big[ \sup_{0\le s \le t}\int_0^s \frac{1}{(s-r)^{\alpha^*}} |u_n(r)-u_{n-1}(r)|^p dr\Big] \\
	& \le c_3(p)t^{\alpha^*} t^{(1-\alpha^*)p/p'} \mathbb{E}\Big[ \int_0^t  \frac{\sup_{0\le r \le s}|u_n(r)-u_{n-1}(r)|^p}{(t-s)^{\alpha^*}} ds\Big],
	\end{aligned}
	$$
	where for the first inequality, we used H\"older's inequality, and for the second inequality we used \eqref{bnd:K} and for the fourth inequality, we used \eqref{frac}.\\
	For  $(ii)$, by H\"older's inequality and the Lipschitz condition \textbf{(A2)}, we have
	$$ \begin{aligned}
	(ii) &= c_1(p) \mathbb{E}\Big[\sup_{0\le s \le t} \big|\int_0^s  f(r,u_n(r))- f(r,u_{n-1}(r)) dr\big|^p\Big] \\
	& \le c_4(p)t^{p/p'}L^*(t) \int_0^t \mathbb{E}\Big[\sup_{0 \le r \le s}|u_n(r)-u_{n-1}(r)|^p\Big] ds \\
	& \le c_4(p)t^{p/p'} t^{\alpha^*}L^*(t) \mathbb{E}\Big[ \int_0^t  \frac{\sup_{0\le r \le s}|u_n(r)-u_{n-1}(r)|^p}{(t-s)^{\alpha^*}} ds\Big],
	\end{aligned}
	$$
	where 
	\begin{equation}
	L^*(t):= \sup_{0 \le s \le t}|L(s)| < +\infty,\quad \forall t \ge 0
	\end{equation}
	is a non-decreasing function and $L(\cdot)$ is given in \textbf{(A2)}. For the second inequality above, we use the trivial bound $\frac{t}{t-s} \ge 1,\quad 0 \le s < t.$ \\
	For $(iii)$, by applying the  $L^p$ Lipschitz condition \textbf{(A2)}, we have for any $t\ge 0$
	$$
	\begin{aligned}
	& \int_0^t \int_{|z|<1} |g(r,u_{n}(s),z) - g(r,u_{n-1}(s),z)|^p\nu(dz)ds \\
	& \le L^*(t) \int_0^t
	 |u_n(s) - u_{n-1}(s)|^p ds .
	\end{aligned}
	$$ 
Since $\{u_n(t)\}_{0\le t \le T}$ is a c\`adl\`ag process, it is locally bounded a.s., consequently we have $\int_0^t |u_n(s) - u_{n-1}(s)|^p ds < \infty,\ a.s.$. 
	$$ \begin{aligned}
	(iii) &=c_1(p)\mathbb{E}\Big[\sup_{0\le s \le t} \big|\int_{0}^{s}\int_{|z|<1} (g(r,u_{n}(r-),z) - g(r,u_{n-1}(r-),z))\tilde{N}(dr,dz)\big|^p \Big]\\
	& \le c_5(p) \mathbb{E}\int_0^t \int_{|z|<1} |g(r,u_{n}(r-),z) - g(r,u_{n-1}(r-),z)|^p\nu(dz)dr \\
	& \le c_5(p)L^*(t) \int_0^t \mathbb{E}\big[\sup_{0\le r \le s}|u_n(r)-u_{n-1}(r)|^p\big] ds \\
	& \le c_5(p)t^{\alpha^*} L^*(t) \mathbb{E}\Big[ \int_0^t  \frac{\sup_{0\le r \le s}|u_n(r)-u_{n-1}(r)|^p}{(t-s)^{\alpha^*}} ds
	\Big].
	\end{aligned}
	$$
	Note that for the first inequality we used \eqref{12}, and for the second inequality above, we used $L^p$ Lipschitz continuity of $g$ and the trivial upper bound $|u_n(s)-u_{n-1}(s)|^p \le \sup_{0\le r \le s}|u_n(r)-u_{n-1}(r)|^p$. Combining the estimates for $(i),(ii),(iii)$, we 
	get that there exists  $c_6(p)>0$ such that
	\begin{equation}\label{e:2}
	\mathbb{E}\big[\sup_{0\le s \le t} |u_{n+1}(s)-u_{n}(s)|^p\big] \le 
	c_6(p) \varphi(t) \int_0^t \frac{\mathbb{E}\big[\sup_{0 \le r \le s}|u_n(r)-u_{n-1}(r)|^p\big]}{(t-s)^{\alpha^*}}ds,
	\end{equation}
	where $\varphi(t)$ is the non-decreasing function defined by 
	\begin{equation} \label{increasing function}
	\varphi(t):= t^{\alpha^*} t^{(1-\alpha^*)p/p'} + t^{\alpha^*} t^{p/p'} L^*(t)+ t^{\alpha^*}L^*(t).
	\end{equation}
	Following the same argument for \eqref{e:2}, 
	using the linear growth condition \textbf{(A3)} instead of the Lipschitz condition, we have 
	\begin{equation}\label{e:3}
	\mathbb{E}\big[\sup_{0\le s \le t} |u_{1}(s)-u_{0}(s)|^p\big] \le c_7(p)\varphi(t) \int_0^t \frac{\mathbb{E}\big[\sup_{0 \le r \le s}|k(r)|^p\big]}{(t-s)^{\alpha^*}}ds.
	\end{equation}
	Now we define 
	$$g_n(t):= \mathbb{E}\big[\sup_{0\le s \le t}|u_{n}(s)-u_{n-1}(s)|^p\big],\quad n\ge 1.$$ 
	It follows from \eqref{e:3} that 
	\begin{equation} \label{est1}
	g_1(t)\le c_{7}(p)\varphi(t) K(t,p) \int_0^t \frac{1}{(t-s)^{\alpha^*}}ds = c_{8}(p) \varphi(t)K(t,p)t^{1-\alpha^*} \le c_9 K(T,p)t^{1-\alpha^*},
	\end{equation}
	where $c_9$ is given by 
	\begin{equation}
	c_9= c_{9}(p,T):= \max\{c_{6}(p)\varphi(T), c_{8}(p)\varphi(T)\}.
	\end{equation}
	We claim that for any $n\ge1,\ t\in [0,T]$,	
	\begin{equation} \label{est3}
	g_n(t) \le \frac{[c_{9}\Gamma(1-\alpha^*)t^{1-\alpha^*}]^n}{\Gamma(1+n(1-\alpha^*))}(1-\alpha^*)K(T,p).
	\end{equation}
	The claim is true for $n=1$ by \eqref{est1} and the identity $\Gamma(1-\alpha^*)(1-\alpha^*) = \Gamma(2-\alpha^*)$. Now we prove the claim by induction. Suppose the claim is true for $n \ge 1$. Then by \eqref{e:2}, we have 
	\begin{equation} \label{est2}
	g_{n+1}(t) \le c_9 \int_0^t \frac{g_n(s)}{(t-s)^{\alpha^*}}ds.
	\end{equation}
	Plugging in the induction step, we have 
\begin{align*}
g_{n+1}(t) & \le c_9 \frac{[c_{9}\Gamma(1-\alpha^*)]^n}{\Gamma(1+n(1-\alpha^*))}(1-\alpha^*)K(T,p)\int_0^t \frac{s^{n(1-\alpha^*)}}{(t-s)^{\alpha^*}}ds \\
& = \frac{[c_{9}\Gamma(1-\alpha^*)t^{1-\alpha^*}]^{n+1}}{\Gamma(1+(n+1)(1-\alpha^*))}(1-\alpha^*)K(T,p).
\end{align*}	
	For the last equality, we used change of variable $u = s/t$, the definition of beta function defined by $B(x,y):=\int_0^1 s^{x-1} (1-s)^{y-1}ds,\ x,y>0$ and the famous quantity $B(x,y) = \frac{\Gamma(x)\Gamma(y)}{\Gamma(x+y)}$ relating the gamma function and beta function.  \\
	
	\noindent \emph{3. Probabilistic argument to show $u_n(t)$ tends to the unique solution $u(t)$}. 
	By Chebyshev's inequality, we have 
	$$\begin{aligned}
	&\sum_{n\ge 1}\mathbb{P}(\sup_{0\le s \le t}|u_n(s)-u_{n-1}(s)|> 2^{-n}) \\
	& \le \sum_{n\ge 1} \frac{[2^p c_{9} \Gamma(1-\alpha^*)t^{1-\alpha^*}]^n}{\Gamma(1+n(1-\alpha^*))}(1-\alpha^*)K(T,p) < \infty.
	\end{aligned}$$
	The finiteness follows from the well-known asymptotic behavior of $\Gamma$: 
	$$\Gamma(1+n(1-\alpha^*)) \sim \sqrt{n(1-\alpha^*)}\big(\frac{n(1-\alpha^*)}{e}\big)^{n(1-\alpha^*)}.$$
	It follows from the Borel-Cantelli lemma and a standard limiting argument, see for example \cite[Chap 6]{App}, that there exists an adapted c\`adl\`ag process $u(t), 0\le t \le T$, such that 
	$$\lim_{n\rightarrow \infty} \sup_{0\le t \le T}|u_n(t)-u(t)| = 0, \ a.s..$$
	To show that $u(t)$ is actually a solution to \eqref{general} on $[0,T]$, we define for any $t \in [0,T]$, 
	$$\tilde{u}(t): = k(t) + \int_{0}^{t}\kappa(t,s)u(s)ds +\int_{0}^{t}f\big(s,u(s)\big)ds +
	\int_{0}^{t}\int_{|z|<1}g\big(s,u(s-),z\big)\widetilde{N}(ds,dz).$$
Our aim is to show $\sup_{0\le t \le T}|\tilde{u}(t)- u(t)| = 0, a.s.$, which implies that $u(t)$ is a strong solution to \eqref{general}. To this end, it is sufficient to show that $u_n(t)$ converges to $u(t),\tilde{u}(t)$ simultaneously in $L^p(\Omega;L^{\infty}([0,T];\mathbb{R}))$ with the norm defined by
	$$||v||_{p,T}:= \big(\mathbb{E}[\sup_{0\le t \le T}|v(t)|^p]\big)^{1/p}.$$
	It is easy to check that $L^p(\Omega;L^{\infty}([0,T];\mathbb{R}))$ is a Banach space. By \eqref{est3}, $\{u_n(t)\}$ forms a Cauchy sequence in $L^p([0,T])$ because $$\sum_{n\ge1} \bigg(\frac{[c_{9} \Gamma(1-\alpha^*)T^{1-\alpha^*}]^n}{\Gamma(1+n(1-\alpha^*))}\bigg)^{1/p} < \infty.$$
	The limit coincides with the a.s. limit $u(t)$ and the convergence rate is given by
	\begin{equation} \label{convergence rate}
	||u_n-u||_{p,T} \le \sum_{k \ge n+1}  \bigg(\frac{[c_{9}\Gamma(1-\alpha^*)T^{1-\alpha^*}]^k}{\Gamma(1+k(1-\alpha^*))} (1-\alpha^*)K(T,p)\bigg)^{1/p},
	\end{equation}
	which tends to zero as $n\to \infty$ using the asymptotic behavior of $\Gamma.$ To show $||u_n-\tilde{u}||_{p,T} \rightarrow 0$, using the same argument for \eqref{e:2}, we get
	$$||u_n-\tilde{u}||_{p,T}^p = \mathbb{E}\big[\sup_{0\le t \le T}|u_n(t)-\tilde{u}(t)|^p\big] \le c_{10}(p)\varphi(T) \int_0^T \frac{||u_{n-1}-u||_{p,T}^p}{(T-t)^{\alpha^*}}dt.$$
	The above tends to zero by \eqref{convergence rate} as $n\to\infty$. In summary, we showed that 
$$\lim_{n\to \infty}||u_n-u||_{p,T} = \lim_{n\to \infty}||u_n-\tilde{u}||_{p,T} = 0,$$	
	thus $\sup_{0\le t \le T}|\tilde{u}(t)- u(t)| = 0, a.s.$, which implies that $u(t)$ is a strong solution to \eqref{general}. \\
	\indent 
	To finish the proof of the first part, it remains to show that if $u,v$ are two strong solutions with the same process $\{k(t)\}$, then they must be the same. 
	Let $h(t):=||u-v||_{p,t}^p $, 
	then using the same argument for \eqref{e:2}, we get
	$$h(T)=||u-v||_{p,T}^p \le c_{11}(p)\varphi(T)\int_0^T\frac{h(t)}{(T-t)^{\alpha^*}}dt.$$
	Recalling $\varphi$ is a non-decreasing function defined in \eqref{increasing function}, we can apply the generalized Gronwall's inequality \eqref{Gronwall:e2} such that $h(T) = 0$, which implies   
	$$\sup_{0\le t \le T}|u(t)- v(t)| = 0, a.s.,$$
i.e., the solution is unique. The first part is complete. \\
	
	\noindent \textbf{Part II}: $K(T,p): =\mathbb{E}\sup_{0 \le t \le T}|k(t)|^p = \infty$, but we have $k(t) \in \mathcal{F}_0, \forall t \ge 0.$\\
	Define
	$$\Omega_N:= \{\sup_{0 \le t \le T}|k(t)| \le N\} \in \mathcal{F}_0, \quad N\ge 1.$$
	We have 
	$$\Omega = \bigcup_{N\ge 1}\Omega_N, \ a.s.$$ Indeed, since $k(t)$ is a c\`adl\`ag process, 
	there exists a $\mathbb{P}$-null set $\mathcal{N}$ such that, when $\omega \in \mathcal{N}^c$, $k(t)(\omega)$ is a c\`adl\`ag function on $[0,T]$, which in particular is bounded. Therefore we must have $\omega$ is in some $\Omega_N$, which implies $\Omega \subset \bigcup_{N\ge 1}\Omega_N, \ a.s.$. The other direction is obvious. \\
	\indent From \textbf{Part I}, we know that for any $N \ge 1$, the following stochastic integral equation 
	\begin{equation}\label{localequation}
	\begin{aligned}
	w(t)& = k(t)1_{\Omega_N} + \int_{0}^{t}\kappa(t,s)w(s)ds +\int_{0}^{t}f\big(s,w(s)\big)ds\\
	&\quad+ \int_{0}^{t}\int_{|z|<1}g\big(s,w(s-),z\big)
	\widetilde{N}(ds,dz) \end{aligned}
	\end{equation}
	has a unique solution which is c\`adl\`ag and adapted, and we denote it as $u^N(t), t\in [0,T]$. Thus we can define for all $t \in [0,T]$,
    \begin{equation} \label{construction of solution}
     u(t)(\omega): = u^N(t)(\omega),\quad \text{if}\ \omega \in \Omega_N. 
    \end{equation}    	
	 First we need to show that $u(t)$ is well-defined. 
	It is equivalent to show that if $M>N$, for $\mathbb{P}$-almost all $ \omega \in \Omega_N \subset \Omega_M$, we have $u^N(t)(\omega) = u^M(t)(\omega),\ \forall t\in [0,T]$. In fact, by Jensen's inequality, we have
	$$\begin{aligned}
	& \mathbb{E}\big[\sup_{0\le t \le T}|u^N(t)-u^M(t)|^p 1_{\Omega_N}\big]\\
	& \le c_1(p)\mathbb{E}\bigg[\sup_{0\le t \le T}\Big(   \big|\int_{0}^{t}\kappa(t,s)(u^N(s)-u^M(s))ds\big|^p 1_{\Omega_N} \\ 
	&\quad+\big|\int_{0}^{t}\big(f(s,u^N(s)) - f(s,u^M(s))\big)ds\big|^p 1_{\Omega_N} \\
	& \quad+ \big|\int_{0}^{t}\int_{|z|<1} \big(g(s,u^N(s-),z) - g(s,u^M(s-),z)\big)\widetilde{N}(ds,dz)\big|^p 1_{\Omega_N}\Big) \bigg].
	\end{aligned}$$
	For the first two integrals, the indicator function $1_{\Omega_N}$ can be put inside because the integral is pathwisely defined. To put the indicator function inside the stochastic integral, we introduce the following stopping time (it is a stopping time because $k(t) \in \mathcal{F}_0,\ \forall t \ge 0$)$$T_N(\omega) = \begin{cases}
	0, &\omega \notin \Omega_N, \\
	\infty,& \omega \in \Omega_N.
	\end{cases}$$
	Then we have $\forall t \in (0,T]$, 
	\begin{equation} \label{localization}
	1_{\Omega_N} = 1_{\{T_N = \infty\}} = 1_{\{T_N \ge t\}}.
	\end{equation}
	
	Now we use the following lemma, whose proof will be given later, to put the indicator function inside the stochastic integral: 
	\begin{lem} \label{l:3.10}
		Suppose $g$ is in $\mathcal{L}^{p,loc}$ for the above $p \in [1,2]$, i.e., 
		$$\int_0^T \int_{\{z: |z|<1\}}|g(s,z)|^p \nu(dz)ds<\infty,\ a.s.$$
		Then we have for any $t \in [0,T]$,
		\begin{equation}\label{inside}
		\int_{0}^{t}\int_{|z|<1} g(s,z)\widetilde{N}(ds,dz) 1_{\Omega_N} = \int_{0}^{t}\int_{|z|<1} g(s,z) 1_{\Omega_N}\widetilde{N}(ds,dz).
		\end{equation}
	\end{lem}	
	
	\noindent Using the $L^p$ Lipschitz continuity, it is easy to check that the predictable process $g(s,z):= g(s,u^N(s-),z) - g(s,u^M(s-),z)$ satisfies the assumption of Lemma \ref{l:3.10}. Therefore,
	$$\begin{aligned}
	& \big|\int_{0}^{t}\int_{|z|<1} \big(g(s,u^N(s-),z) - g(s,u^M(s-),z)\big)\widetilde{N}(ds,dz)\big|^p 1_{\Omega_N} \\
	&= \big|\int_{0}^{t}\int_{|z|<1} \big(g(s,u^N(s-),z) - g(s,u^M(s-),z)\big)1_{\Omega_N}\widetilde{N}(ds,dz)\big|^p.
	\end{aligned}$$
Using the same argument for \eqref{e:2}, we get
	$$
	\mathbb{E}\Big[\sup_{0\le t \le T}|u^N(t)-u^M(t)|^p 1_{\Omega_N} \Big] \le c_{12}(p)\varphi(T) \int_0^T \frac{\sup_{0\le r \le s}\mathbb{E}\big[|u^N(r)-u^M(r)|^p 1_{\Omega_N}\big]}{(T-s)^{\alpha^*}} ds.
	$$
	Thus by the generalized Gronwall's inequality \eqref{Gronwall:e2} we have 
	$$
	\sup_{0 \le t \le T}\big|u^N(t) - u^M(t)\big| 1_{\Omega_N} = 0, \ a.s.,
	$$
	which shows that $u(t)$ is well-defined.
	
	It remains to show that $u(t)$ is the unique solution to \eqref{general}. First we show that it is actually a solution to \eqref{general}. Indeed, for any $N\ge1$, using \eqref{inside} twice, we have for any $t \in [0,T]$,
	\begin{align*}
	u(t)1_{\Omega_N} =&\ u^N(t)1_{\Omega_N}  = k(t)1_{\Omega_N} + \big\{\int_{0}^{t}\kappa(t,s)u^N(s)ds +\int_{0}^{t}f\big(s,u^N(s)\big)ds \\
	& +\int_{0}^{t}\int_{|z|<1}g\big(s,u^N(s-),z\big)\widetilde{N}(ds,dz)\big\}1_{\Omega_N} \\
	=& \ k(t)1_{\Omega_N} + \int_{0}^{t}\kappa(t,s)u^N(s)1_{\Omega_N}ds +\int_{0}^{t}f\big(s,u^N(s)\big)1_{\Omega_N}ds \\
	& +\int_{0}^{t}\int_{|z|<1}g\big(s,u^N(s-),z\big)1_{\Omega_N}\widetilde{N}(ds,dz)\\
	=& \ \Big[ k(t) + \int_{0}^{t}\kappa(t,s)u(s)ds +\int_{0}^{t}f\big(s,u(s)\big)ds \\
	& \ +\int_{0}^{t}\int_{|z|<1}g\big(s,u(s-),z\big)\widetilde{N}(ds,dz)\Big]1_{\Omega_N}. 
	\end{align*}
	Thus \eqref{general} holds on every $\Omega_N$, thus on $\Omega = \bigcup_{N\ge1}\Omega_N\ a.s.$, which implies that $u(t)$ is actually a solution.
	
	\textbf{Step II} would be finished if we can show the solution of \eqref{general} is unique. Indeed, suppose $v(t)$ is also a solution to \eqref{general} with the same $\{k(t)\}$, then we claim that for all $N \ge 1$ and any $t\in [0,T]$, $v(t) = u^N(t)$, a.s. on $\Omega_N$. If the claim is true, recalling the definition of $u(t)$ in \eqref{construction of solution}, we know that $v(t)$ coincides with $u(t)$ a.s. and the proof would be complete.
	
	Now we prove the above claim by contradiction: suppose the claim is not true. In other words, there exists an $N_0\ge 1$, such that $v(t)$ and $u^{N_0}(t)$ are not the same on $\Omega_{N_0}$. In other words, there exists a measurable set $\mathcal{N} \subseteq \Omega_{N_0}$ a.s. with $\mathbb{P}(\mathcal{N})>0$ such that for any $\omega_0 \in \mathcal{N}$, there exists a $t_0 = t_0(\omega_0) \in [0,T]$, $v(t_0)(\omega_0) \neq u^{N_0}(t_0)(\omega_0)$. To get the contradiction, we construct another c\`adl\`ag process $\widetilde{v}(t)$ defined by 
	\begin{equation} \label{constructed}
	\widetilde{v}(t) = v(t)1_{\Omega_{N_0}} + u^{N_0}(t)1_{\Omega_{N_0}^c}.
	\end{equation}
	Note that $\widetilde{v}(t)$ is adapted because $1_{\Omega_N} \in \mathcal{F}_0$. 
	We claim that $\widetilde{v}(t)$ is a solution to \eqref{localequation}. However, since $v(t)$ and $u^{N_0}(t)$ are not the same on $\Omega_{N_0}$, the claim contradicts the fact that \eqref{localequation} has only one solution. This will prove the uniqueness of \eqref{general}. 
	
	To show that $\widetilde{v}(t)$ is a solution of \eqref{general}, note that $v(t)$ is a solution of \eqref{general} and $u^{N_0}(t)$ is a solution of \eqref{localequation}. Therefore, we have 
	\begin{align*}
	\widetilde{v}(t) & = v(t)1_{\Omega_{N_0}} + u^{N_0}(t)1_{\Omega_{N_0}^c} \\
	& = k(t)1_{\Omega_{N_0}} + \big\{\int_{0}^{t}\kappa(t,s)v(s)ds +\int_{0}^{t}f\big(s,v(s)\big)ds \\
	& +\int_{0}^{t}\int_{|z|<1}g\big(s,v(s-),z\big)\widetilde{N}(ds,dz)\big\}1_{\Omega_{N_0}} \\
	& + k(t)1_{\Omega_{N_0}}1_{\Omega_{N_0}^c} + \big\{\int_{0}^{t}\kappa(t,s)u^{N_0}(s)ds +\int_{0}^{t}f\big(s,u^{N_0}(s)\big)ds \\
	& +\int_{0}^{t}\int_{|z|<1}g\big(s,u^{N_0}(s-),z\big)\widetilde{N}(ds,dz)\big\}1_{\Omega_{N_0}^c} \\
	& = k(t)1_{\Omega_{N_0}} + \int_{0}^{t}\kappa(t,s)( v(s)1_{\Omega_{N_0}} + u^{N_0}(s)1_{\Omega_{N_0}^c})ds \\
	& + \int_{0}^{t} \big( f\big(s,v(s)\big)1_{\Omega_{N_0}} + f\big(s,u^{N_0}(s)\big)1_{\Omega_{N_0}^c} \big)ds \\
	& + \int_{0}^{t}\int_{|z|<1} \big(g\big(s,v(s-),z\big)1_{\Omega_{N_0}} + g\big(s,u^{N_0}(s-),z\big)1_{\Omega_{N_0}^c}\big)\widetilde{N}(ds,dz)\big\} \\
	& = k(t)1_{\Omega_{N_0}} + \int_{0}^{t}\kappa(t,s)\widetilde{v}(s)ds + \int_{0}^{t} f\big(s,\widetilde{v}(s)\big) ds \\
	& + \int_{0}^{t}\int_{|z|<1} g\big(s,\widetilde{v}(s-),z\big) \widetilde{N}(ds,dz),
	\end{align*}
	which shows that $\widetilde{v}(t)$ is a solution of \eqref{localequation}, and the uniqueness of \eqref{general} is proved. The proof of the proposition is complete.
\end{proof}

\textbf{Proof of Lemma \ref{l:3.10}}. We only need to deal with the case $t>0$. 
First we use the standard approximation in \cite[Chap 4]{App}: 
suppose $A_n$ increases to $\{z: |z|<1\}$, and $\nu(A_n)< \infty, \forall n \ge 1$. Then we have for any $t \in [0,T]$,
$$\int_{0}^{t}\int_{A_n} g(s,z)\widetilde{N}(ds,dz) 1_{\Omega_N} \rightarrow \int_{0}^{t}\int_{|z|<1} g(s,z)\widetilde{N}(ds,dz) 1_{\Omega_N}$$
in probability. We only need to show the following for any $n\ge 1$:
\begin{equation}\label{inside2}
\int_{0}^{t}\int_{A_n} g(s,z)\widetilde{N}(ds,dz) 1_{\Omega_N} = \int_{0}^{t}\int_{A_n} g(s,z) 1_{\Omega_N}\widetilde{N}(ds,dz).
\end{equation}
To prove \eqref{inside2}, we use the definition of the integral. Recalling that $\nu(A_n)< \infty$ and applying Lemma \ref{large}, 
we can define $P^n(t) = \int_0^t \int_{A_n}z N(ds,dz)$, 
which is a compound Poisson process. Define $\{T_j^n\}_{j\ge 1}$ by $T_0^n = 0$ and for $j\ge 1$, 
$$T_j^n := \inf \{t > T_{j-1}^n: P^n(t) \neq P^n(t-)\}.$$
Then the left-hand side of \eqref{inside2} is equal to 
\begin{align*}
& \int_{0}^{t}\int_{A_n} g(s,z)\widetilde{N}(ds,dz) 1_{\Omega_N} \\
& = \big[\sum_{s \le t\wedge T_j^n, j\ge 1} g(s,\Delta P(s))\big] 1_{\Omega_N} - \int_0^t \int_{A_n}g(s,z)\nu(dz)ds1_{\Omega_N} \\
& = \big[\sum_{s \le t\wedge T_j^n, j\ge 1} g(s,\Delta P(s))\big] 1_{\{T_N \ge t\}} - \int_0^t \int_{A_n}g(s,z)1_{\Omega_N}\nu(dz)ds \\
& = \sum_{s \le t\wedge T_j^n \wedge T_N, j\ge 1} g(s,\Delta P(s)) - \int_0^t \int_{A_n}g(s,z)1_{\Omega_N}\nu(dz)ds \\
& = \sum_{s \le t\wedge T_j^n, j\ge 1} \big[g(s,\Delta P(s))1_{T_N \ge s}\big] - \int_0^t \int_{A_n}g(s,z)1_{\Omega_N}\nu(dz)ds \\
& = \sum_{s \le t\wedge T_j^n, j\ge 1} \big[g(s,\Delta P(s))1_{\Omega_N}\big] - \int_0^t \int_{A_n}g(s,z)1_{\Omega_N}\nu(dz)ds \\
& =\int_{0}^{t}\int_{A_n} g(s,z) 1_{\Omega_N}\widetilde{N}(ds,dz).
\end{align*}
For the second and fifth equality, we used \eqref{localization}. Also note that $1_{\Omega_N} \in \mathcal{F}_0$ thus $g(s,z) 1_{\Omega_N}$ is predictable. Thus \eqref{inside2} is true, and the proof of the lemma is complete.
\qed

\noindent {\textbf{Proof of Theorem \ref{main:1}}}. 
Taking $k(t) \equiv u_0 $ in Proposition \ref{stronger result}, we immediately get \textbf{Step I} and {\textbf Step II}. \\
\indent \textbf{Step III}: Suppose $h \neq 0$. \\
We use the stopping times $\{T_n\}_{n\ge 0}$ in Lemma \ref{large} to construct the solution inductively. From \textbf{Step II}, define $v_0(t)$ to be the unique solution to 
$$v(t) = u_0 + \int_0^t \kappa(t,s)v(s)ds + \int_0^t f(s,v(s))ds + \int_0^t \int_{|z|<1}g(s,v(s-),z)\widetilde{N}(ds,dz),$$ 
then we define $u(t), t\in [0,T_1]$ as
$$u(t) = \begin{cases}
v_0(t), &t < T_1,\\
v_0(T_1-) + h(T_1, v_0(T_1-),\Delta P(T_1)), &t = T_1.
\end{cases}$$
After the first jump, the memory term comes into play. Since 
$$T_1, P(T_1) \in \mathcal{F}_{T_1},\quad \sigma\{v_0(t): 0\le t < T_1\} \subset \mathcal{F}_{T_1},$$
we know that
\begin{align*}
\hspace{-0.5in}k_1(t)&: = u_0 + \int_0^{T_1}\kappa(t,s)v_0(s)ds + \int_0^{T_1} f(s,u(s))ds \\
& + \int_0^{T_1} \int_{|z|<1}g(s,u(s-),z)\widetilde{N}(ds,dz) + h(T_1, v_0(T_1-), \Delta P(T_1))
\end{align*}
is $\mathcal{F}_{T_1}$-measurable for any $t \ge T_1$. Then applying Proposition \ref{stronger result}, replacing $\mathcal{F}_0$ by $\mathcal{F}_{T_1}$ in our case, the following equation
$$v(t) = k_1(t)+ \int_{T_1}^t \kappa(t,s)v(s)ds + \int_{T_1}^t f(s,v(s))ds + \int_{T_1}^t \int_{|z|<1}g(s,v(s-),z)\widetilde{N}(ds,dz)$$
has a unique solution $v_1(t)$ on $[T_1, \infty)$. Define $u(t)$ on $[T_1,T_2]$ by
$$u(t) = \begin{cases}
v_1(t),& T_1 \le t < T_2, \\
v_1(T_2-) + h(T_2, v_1(T_2-),\Delta P(T_2)), &t = T_2.
\end{cases}$$
Suppose we already defined the solution $u(t)$ on $[0,T_n], n \ge 2$, and $v_i(t), 1 \le i \le n-1$, then we define $$\begin{aligned}
k_n(t): =& u_0 + \int_0^{T_n}\kappa(t,s)u(s)ds + \int_0^{T_n} f(s,u(s))ds \\
& + \int_0^{T_n} \int_{|z|<1}g(s,u(s-),z)\widetilde{N}(ds,dz) + \sum_{i=1}^n h(T_i, v_{i-1}(T_i-), \Delta P(T_i)).
\end{aligned}$$
The following equation
$$v(t) = k_n(t)+ \int_{T_n}^t \kappa(t,s)v(s)ds + \int_{T_n}^t f(s,v(s))ds + \int_{T_n}^t \int_{|z|<1}g(s,v(s-),z)\widetilde{N}(ds,dz)$$
has a unique solution $v_n(t)$ on $[T_n,\infty)$. Now we can define $u(t)$ on $[T_n,T_{n+1}]$ by
$$u(t) = \begin{cases}
v_{n}(t), &T_{n} \le t < T_{n+1},\\
v_{n}((T_{n+1}-) + h(T_{n+1}, v_{n}(T_{n+1}-),\Delta P(T_{n+1})), &t = T_{n+1}.
\end{cases}$$
Since $\lim_{n \rightarrow \infty}T_n = \infty,\ a.s.$, the solution is defined on the whole $[0,\infty)$. \\
\indent Now from \textbf{Step II} and Lemma \ref{large}, $u(t)$ is indeed a solution to \eqref{model} and piecewisely unique, thus unique on the whole $[0,\infty)$. \qed 

\section{Proof of moment estimates and H\"older regularity}
\noindent \textbf{Proof of Theorem \ref{main:2}}: 
Recall that $u(t)$ is a solution to \eqref{model}. Fix $T>0$. \\
If $1\le p\le 2$, by the linear growth condition of $f,g,h$, we can apply the inequalities  \eqref{12} and \eqref{large1}. Then by Jensen's inequality and the same argument for \eqref{est2}, we get the following moment estimate:
$$
\begin{aligned}
& \mathbb{E}\Big[ \sup_{0\le t \le T}|u(t)|^p \Big] \\
& \le c_1(p) \bigg \{ \mathbb{E}\big[|u_0|^p\big] + \mathbb{E}\Big[ \sup_{0\le t \le T}|\int_{0}^{t}\kappa(t,s)u(s)ds|^p\Big] + \mathbb{E}\Big[ \sup_{0\le t \le T}|\int_{0}^{t}f\big(s,u(s)\big)ds|^p \Big]\\
&+ \mathbb{E}\Big[ \sup_{0\le t \le T}\big|\int_{0}^{t}\int_{|z|<1}g\big(s,u(s-),z\big)\widetilde{N}(ds,dz)\big|^p  \Big]\\
&+ \mathbb{E}\Big[ \sup_{0\le t \le T}\big|\int_{0}^{t}\int_{|z|\ge1}h\big(s,u(s-),z\big)N(ds,dz)\big|^p\Big]
\bigg\} \\
& \le c_2(p,T) \bigg(\mathbb{E}|u_0|^p + 1 + \int_0^T\frac{\mathbb{E}\big[\sup_{0\le s \le t}|u(s)|^p\big]}{(T-t)^{\alpha^*}}dt\bigg),
\end{aligned}$$
where $c_2(p,T)$ is non-decreasing in $T$ for fixed $p$. The reason is that $c_2(p,T)$ is defined via the sum of $\varphi(T)$ and $C_9(p,T)$. Recall that $\varphi(T)$ is defined in \eqref{increasing function} and $C_9(p,T)$ is chosen in Lemma \ref{inequality: moment}. Then by the generalized Gronwall inequality, we have 
$$\mathbb{E}\big[\sup_{0\le t \le T}|u(t)|^p\big] \le c_3(p,T,||u_0||_p) E_{1-\alpha^*,1}(c_2(p,T)\Gamma(1-\alpha^*)T^{1-\alpha^*})< \infty,$$
where $E_{p,q}$ is the Mittag-Leffler function defined in Lemma \ref{Gronwall}. \\
\noindent For the case $p\ge 2$, since we assume in addition that $g$ satisfies $L^2$ linear growth condition, we have the desired result by applying \eqref{ge2} instead of \eqref{12}. 
\qed  \\

\noindent \textbf{Proof of Theorem \ref{main:3}}: 
First we assume $1 \le p \le 2$. 
Since $u(t)$ is a solution to \eqref{model}, for $0\le t_1 < t_2 \le T$, we have
\begin{align*}
u(t_2)- u(t_1) & = \int_0^{t_2}\kappa(t_2,s)u(s)ds - \int_0^{t_1}\kappa(t_1,s)u(s)ds + \int_{t_1}^{t_2} f(s,u(s))ds  \\
& + \int_{t_1}^{t_2} \int_{\{z: |z|<1\}}g(s,u(s))\widetilde{N}(ds,dz) + \int_{t_1}^{t_2} \int_{\{z: |z|\ge 1\}}h(s,u(s))N(ds,dz).
\end{align*}
Then using the argument for \eqref{est2} and applying Theorem \ref{main:2}, we get
\begin{align*}
& \mathbb{E}\bigg|\int_{t_1}^{t_2} f(s,u(s))ds + \int_{t_1}^{t_2} \int_{\{z: |z|<1\}}g(s,u(s))\widetilde{N}(ds,dz) \\
&\quad+ \int_{t_1}^{t_2} \int_{\{z: |z|\ge 1\}}h(s,u(s))N(ds,dz)\bigg|^p \\
& \le c_1(p,T)(t_2 - t_1).
\end{align*}
Therefore, we only need to deal with the memory term
\begin{equation}\label{memory:regularity}
I: = \mathbb{E}\bigg|\int_0^{t_2}\kappa(t_2,s)u(s)ds - \int_0^{t_1}\kappa(t_1,s)u(s)ds\bigg|^p.
\end{equation}
Using elementary analysis and \eqref{kappa}, we can rewrite $I$ as 
\begin{align*}
I & = \mathbb{E}\bigg|\int_0^{t_1}\big(\kappa(t_2,s)-\kappa(t_1,s)\big)u(s)ds - \int_{t_1}^{t_2}\kappa(t_2,s)u(s)ds\bigg|^p \\
& \le c_2(p) \bigg\{ \mathbb{E}\bigg|\int_0^{t_1}\bigg(\frac{1}{\Gamma(1-\alpha(t_2))} - \frac{1}{\Gamma(1-\alpha(t_1))}\bigg)\frac{u(s)}{(t_1-s)^{\alpha(t_1)}}ds\bigg|^p \\
& + \mathbb{E}\bigg|\int_0^{t_1} \frac{1}{\Gamma(1-\alpha(t_2))}\bigg(\frac{1}{(t_2-s)^{\alpha(t_2)}} - \frac{1}{(t_1-s)^{\alpha(t_1)}}\bigg)u(s)ds\bigg|^p \\
& + \mathbb{E}\bigg|\int_{t_1}^{t_2} \frac{1}{\Gamma(1-\alpha(t_2))}\frac{1}{(t_2-s)^{\alpha(t_2)}}u(s)ds \bigg|^p \bigg\}\\
& =:c_2(p)(I_1 + I_2 + I_3).
\end{align*}
Since $\Gamma$ is locally Lipschitz on $(0,\infty)$ (see Lemma \ref{lem:Gamma}), we have 
\begin{equation} \label{locLip}
|\Gamma(1-\alpha(t_2))- \Gamma(1-\alpha(t_1))| \le c(T)|\alpha(t_2) - \alpha(t_1)|.
\end{equation}
It is also easy to see that
\begin{equation} \label{locbdd}
\sup_{0\le t \le T}\frac{1}{\Gamma(1-\alpha(t))} \le 1.
\end{equation}
\noindent For $I_1$, by H\"older's inequality, 
Theorem \ref{main:2},
\eqref{locbdd}, \eqref{locLip} and \textbf{(A1')}, we have 
\begin{align*}
I_1 & \le \bigg(\int_0^{t_1}\bigg|\frac{1}{\Gamma(1-\alpha(t_2))} - \frac{1}{\Gamma(1-\alpha(t_1))}\bigg|\frac{1}{(t_1-s)^{\alpha(t_1)}}ds \bigg)^{p/p'}\\
& \cdot\mathbb{E}\left[\int_0^{t_1}\bigg|\frac{1}{\Gamma(1-\alpha(t_2))} - \frac{1}{\Gamma(1-\alpha(t_1))}\bigg|\frac{|u(s)|^p}{(t_1-s)^{\alpha(t_1)}}ds\right] \\
& \le c_3(p,T)(t_2-t_1)^{p\gamma}.
\end{align*}
For $I_3$, by H\"older's inequality, 
Theorem \ref{main:2},
\eqref{locbdd} and a direct computation, we have 
\begin{align*}
I_3 & \le \bigg(\int_{t_1}^{t_2}\frac{1}{\Gamma(1-\alpha(t_2))}\frac{1}{(t_2-s)^{\alpha(t_2)}}\bigg)^{p/p'}\mathbb{E}\left[\int_{t_1}^{t_2}\frac{1}{\Gamma(1-\alpha(t_2))}\frac{|u(s)|^p}{(t_2-s)^{\alpha(t_2)}} ds\right] \\
& \le c_4(p,T)(t_2-t_1)^{(p-1)(1-\alpha^*(T))}(t_2-t_1)^{1- \alpha^*(T)} = c_4(p,T)(t_2 - t_1)^{p(1-\alpha^*(T))},
\end{align*}
where $\alpha^*(T):=\sup_{0\le t \le T}\alpha(t)$. The hard part is to estimate $I_2$, first we do a change of variables $\tilde{s}= t_1-s$ to the integral, 
\begin{align*}
I_2& = \mathbb{E}\bigg|\int_0^{t_1} \frac{1}{\Gamma(1-\alpha(t_2))}\bigg(\frac{1}{(t_2-s)^{\alpha(t_2)}} - \frac{1}{(t_1-s)^{\alpha(t_1)}}\bigg)u(s)ds\bigg|^p \\
& = \mathbb{E}\bigg|\int_0^{t_1} \frac{1}{\Gamma(1-\alpha(t_2))}\bigg(\frac{1}{(t_2-t_1+s)^{\alpha(t_2)}} - \frac{1}{s^{\alpha(t_1)}}\bigg)u(t_1 - s)ds\bigg|^p.
\end{align*}
If $t_1 \le t_2-t_1$, then by H\"older's inequality, Theorem \ref{main:2},
\eqref{locbdd} and a direct computation, we get 
\begin{align*}
I_2& \le \mathbb{E}\int_0^{t_1} \frac{1}{\Gamma(1-\alpha(t_2))}\bigg(\frac{1}{(t_2-t_1+s)^{\alpha(t_2)}} + \frac{1}{s^{\alpha(t_1)}}\bigg)|u(t_1 - s)|^pds \\
& \cdot \bigg(\int_0^{t_1} \frac{1}{\Gamma(1-\alpha(t_2))}\bigg(\frac{1}{(t_2-t_1+s)^{\alpha(t_2)}} + \frac{1}{s^{\alpha(t_1)}}\bigg)ds\bigg)^{p/p'} \\
& \le c_5(p,T)(t_2 - t_1)^{p(1-\alpha^*(T))}.
\end{align*}
If $t_1 > t_2-t_1$, we rewrite $I_2$ as 
\begin{align*}
I_2& = \mathbb{E}\bigg| \bigg(\int_0^{t_2 - t_1} + \int_{t_2 - t_1}^{t_1}\bigg) \frac{1}{\Gamma(1-\alpha(t_2))}\bigg(\frac{1}{(t_2-t_1+s)^{\alpha(t_2)}} - \frac{1}{s^{\alpha(t_1)}}\bigg)u(t_1 - s)ds\bigg|^p \\
& \le c_6(p) \bigg\{ \mathbb{E}\bigg|\int_0^{t_2 - t_1} \frac{1}{\Gamma(1-\alpha(t_2))}\bigg(\frac{1}{(t_2-t_1+s)^{\alpha(t_2)}} - \frac{1}{s^{\alpha(t_1)}}\bigg)u(t_1 - s)ds\bigg|^p \\
& + \mathbb{E}\bigg| \int_{t_2 - t_1}^{t_1} \frac{1}{\Gamma(1-\alpha(t_2))}\bigg(\frac{1}{(t_2-t_1+s)^{\alpha(t_2)}} - \frac{1}{s^{\alpha(t_2)}}\bigg)u(t_1 - s)ds\bigg|^p \\
& + \mathbb{E}\bigg| \int_{t_2 - t_1}^{t_1} \frac{1}{\Gamma(1-\alpha(t_2))}\bigg(\frac{1}{s^{\alpha(t_2)}} - \frac{1}{s^{\alpha(t_1)}}\bigg)u(t_1 - s)ds\bigg|^p
\bigg\} \\
& =: c_6(p)(I_{21}+ I_{22}+ I_{23}).
\end{align*}
$I_{21}$ can be bounded similarly as $I_2$ when $t_1 \le t_2-t_1$, so we have 
$$I_{21} \le c_7(p,T)(t_2 - t_1)^{p(1-\alpha^*(T))}.$$
For $I_{22}$, observe that for any $s$, $$\big|\frac{1}{(t_2-t_1+s)^{\alpha(t_2)}} - \frac{1}{s^{\alpha(t_2)}}\big| = \frac{1}{s^{\alpha(t_2)}} - \frac{1}{(t_2-t_1+s)^{\alpha(t_2)}}.$$
then by H\"older's inequality, Theorem \ref{main:2},
\eqref{locbdd} and a direct computation, we get
\begin{align*}
I_{22}& \le c_8(p,T) \bigg( \int_{t_2 - t_1}^{t_1}\frac{1}{s^{\alpha(t_2)}} - \frac{1}{(t_2-t_1+s)^{\alpha(t_2)}}ds\bigg)^{1+p/p'} \\
& = \frac{c_8(p,T)}{(1-\alpha(t_2))^p} \bigg( (2^{1-\alpha(t_2)}-1)(t_2-t_1)^{1-\alpha(t_2)} - (t_2^{1-\alpha(t_2)} - t_1^{1-\alpha(t_2)})\bigg)^p \\
& \le c_9(p,T)(t_2-t_1)^{p(1-\alpha^*(T))},
\end{align*}
where we used the elementary fact: for $x,y \in \mathbb{R}_+, \alpha \in [0,1]$, we have  $$|x^{\alpha} - y^{\alpha}| \le |x-y|^{\alpha}.$$
For $I_{23}$, by H\"older's inequality, Theorem \ref{main:2},
\eqref{locbdd} and a direct computation, we get
\begin{align*}
I_{23}& \le c_{10}(p,T) \bigg( \int_{t_2 - t_1}^{t_1}\bigg|\frac{1}{s^{\alpha(t_2)}} - \frac{1}{s^{\alpha(t_1)}}\bigg|ds\bigg)^p.
\end{align*}

To proceed, we need the following elementary fact: For any $0\le x \le 1$, $0\le \alpha \le 1$, it holds that
\begin{equation} \label{alpha:inequality}
|x^{\alpha}-1| \le \alpha |\log x|.
\end{equation}
\indent Without loss of generality, we assume $t_1 > 1$, otherwise only the first term will appear in the following estimate. We can also assume $\alpha(t_2) \ge \alpha(t_1)$ because the other case can be treated similarly. Applying \eqref{alpha:inequality},  we get
\begin{align*}
I_{23}& \le c_{11}(p,T) \bigg( \int_{t_2 - t_1}^{1}\bigg|\frac{1}{s^{\alpha(t_2)}} - \frac{1}{s^{\alpha(t_1)}}\bigg|ds\bigg)^p +  c_{11}(p,T) \bigg( \int_{1}^{t_1}\bigg|\frac{1}{s^{\alpha(t_2)}} - \frac{1}{s^{\alpha(t_1)}}\bigg|ds\bigg)^p \\
& \le c_{11}(p,T) \bigg( \int_{t_2 - t_1}^{1}s^{-\alpha(t_2)}\bigg|s^{\alpha(t_2) - \alpha(t_1)} - 1\bigg|ds\bigg)^p\\
&\quad  +  c_{11}(p,T) \bigg( \int_{1}^{t_1}s^{-\alpha(t_1)}\bigg|\frac{1}{s^{\alpha(t_2) -\alpha(t_1)}} -1\bigg|ds\bigg)^p \\
& \le c_{12}(p,T)(t_2-t_1)^{p\gamma} \bigg\{\bigg( \int_{t_2 - t_1}^{1}s^{-\alpha(t_2)}|\log s|ds\bigg)^p + \bigg( \int_{1}^{t_1}s^{-\alpha(t_1)}|\log s|ds\bigg)^p \bigg\} \\
& \le c_{13}(p,T)(t_2-t_1)^{p\gamma}.
\end{align*}
Note that for the third inequality, we used \eqref{alpha:inequality}. Combining all the estimates above, we have 
\begin{equation}\label{finalregularity}
\mathbb{E}|u(t_2)-u(t_1)|^p \le C_3(p,T)(t_2-t_1)^{C_4(p,T)},
\end{equation}
where $C_4(p,T): = \min \{1, p\gamma, p(1-\alpha^*(T))\}$. \\
\indent Now for the case $p \ge 2$, the estimates for the memory term are the same. Since we assume the $L^2$ linear growth condition for $g$, our result follows easily by applying \eqref{ge2}. In fact, the result is exactly of the form \eqref{finalregularity}.
\qed 
\begin{remark}
	If we replace the L\'evy noise by a white noise, then for any $p\ge 2$, using the well-known Burkholder-Davis-Gundy inequality, we can show that the solution $u(t)= u_0 + \int_{0}^{t}\kappa(t,s)u(s)ds +\int_{0}^{t}f\big(s,u(s)\big)ds + \int_{0}^{t}g(s,u(s))dB_s$ satisfies 
	\begin{align*}
	\mathbb{E}|u(t_2)-u(t_1)|^p \le c_1(p,T) (t_2-t_1)^{c_2(p,T)},
	\end{align*}
	where $$c_2(p,T):= \min\{2-\frac{2}{p}, p\gamma, p(1-\alpha^*(T))\}$$
	Then from the well-known Kolmogorov continuity theorem, if we take $p$ large enough such that $c_2(p,T)>1$, then the above solution has a H\"older continuous version.(Recall that we need the power of the time to be greater than 1)  \\
	\indent However, in our case, if one of the jump terms 
	$$\int_{\{z: |z|<1\}}g(s,u(s))\widetilde{N}(ds,dz),\ \int_{t_1}^{t_2} \int_{\{z: |z|\ge 1\}}h(s,u(s))N(ds,dz)$$
	appears, then from our moment inequalities, the power in \eqref{finalregularity} is
	$$C_4(p,T): = \min \{1, p\gamma, p(1-\alpha^*(T))\} \le 1$$
	Thus we can not expect the solution to have a continuous version, which is natural because we have random jumps.
\end{remark}
\section{Further discussions}
\subsection{General kernel function $\kappa(t,s)$}
Though we are dealing with a special kernel function, which is given as \eqref{kappa}, our method definitely applies to more general kernel functions, as in \cite{Zhang}. Indeed, to establish Theorem \ref{main:1}, if we take a closer look at the proof, the essential ingredients 
are: the Gronwall type inequality in Lemma \ref{Gronwall}, and also Lemma \ref{scale}.\\
\indent Suppose the kernel function $\kappa(t,s)$ in \eqref{integral form} is a general one, i.e., $\kappa: \{(s,t): 0\le s< t\} \rightarrow (0,\infty) $ is a measurable function. We want $\kappa(t,s)$ to satisfy Lemma \ref{Gronwall} and Lemma \ref{scale}. Indeed, for the Gronwall type inequality, we need the following: For any $T>0$, there exists a constant $c_1(T)>0$ and another kernel function $\widetilde{\kappa}: \{(s,t): 0\le s< t\}\rightarrow (0,\infty)$ such that 
\begin{align*}
\kappa(t,s) \le c_1(T) \widetilde{\kappa}(t,s),\ 0\le s < t \le T 
\end{align*}
and $\widetilde{\kappa}(t,s)$ satisfies 
\begin{equation} \label{general kernel:1}
\begin{aligned}
& \sup_{0\le t\le T}\int_0^t\widetilde{\kappa}(t,s)ds < +\infty,\ \forall T>0, \\
& \overline{\lim_{\varepsilon \rightarrow 0}}\sup_{0\le t \le T}\int_t^{t+\varepsilon} \widetilde{\kappa}(t+ \varepsilon,s)ds < 1,\ \forall T > 0.
\end{aligned}
\end{equation}
We can obtain (see \cite{Zhang} for details) the following result: 
\begin{lem}
	Define $$
	\begin{cases}
	r_1(t,s): = \widetilde{\kappa}(t,s) \\
	r_n(t,s):= \int_s^t \widetilde{\kappa}(t,u)r_{n-1}(u,s)du,\ n\ge 1.
	\end{cases}$$
	Then $\forall T>0$, there exist $C_{10}(T)>0$ and $C_{11}(T)\in (0, 1)$ such that 
	\begin{equation}
	\begin{aligned}
	\sup_{0\le t\le T}\int_0^t r_n(t,s)ds \le C_{10}(T)nC_{11}(T)^n\quad \forall n\ge1.
	\end{aligned}
	\end{equation}
	Moreover, 
	\begin{equation} \label{resolvent}
	r(t,s):= \sum_{n\ge1}r_n(t,s)
	\end{equation}
	satisfies 
	$
	r(t,s) - \widetilde{\kappa}(t,s) = \int_s^t \widetilde{\kappa}(t,u)r(u,s)du = \int_s^t r(t,u)\widetilde{\kappa}(u,s)du.
	$
\end{lem}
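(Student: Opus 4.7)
The plan is to establish the decay bound for $F_n(t) := \int_0^t r_n(t,s)\,ds$ via a two-index recursion over a uniform partition of $[0,T]$, and then use monotone convergence (plus a small induction on Volterra powers) to define $r$ and verify both sides of the resolvent identity. From the first hypothesis in \eqref{general kernel:1}, set $M := \sup_{0\le t\le T}\int_0^t \widetilde{\kappa}(t,s)\,ds<\infty$; from the second, fix $\delta=\delta(T)>0$ and $q=q(T)\in(0,1)$ such that $\int_{t-\varepsilon}^{t}\widetilde{\kappa}(t,s)\,ds\le q$ for every $t\in[0,T]$ and $\varepsilon\in(0,\delta]$. Put $N:=\lceil T/\delta\rceil$.

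A Fubini computation converts the defining convolution into the scalar recursion $F_{n+1}(t)=\int_0^t \widetilde{\kappa}(t,u) F_n(u)\,du$. Introducing the monotone envelope $B_n(t):=\sup_{0\le r\le t}F_n(r)$ and splitting the integral at $u=(t-\delta)^+$, the two hypotheses yield
$$B_{n+1}(t)\le q\,B_n(t)+M\,B_n\bigl((t-\delta)^+\bigr),\qquad t\in[0,T].$$
Restricting to the mesh $t_k=k\delta$ and writing $f_n^{(k)}:=B_n(k\delta)$, this becomes $f_{n+1}^{(k)}\le q f_n^{(k)}+M f_n^{(k-1)}$ with $f_n^{(0)}=0$ and $f_1^{(k)}\le M$. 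Iterating (view the recursion as a weighted lattice walk with $q$-weighted ``holds'' and $M$-weighted ``left shifts''), one obtains
$$f_n^{(N)}\le M\sum_{j=0}^{\min(n-1,\,N-1)}\binom{n-1}{j}M^j q^{n-1-j}.$$
For $n\ge N$, the truncation at $j=N-1$ combined with the crude bound $\binom{n-1}{j}\le n^{N-1}$ gives $f_n^{(N)}\le C_\ast\,n^{N-1}q^n$, and this extends to all $n\ge 1$ at the cost of enlarging $C_\ast$. Finally, for any $q'\in(q,1)$ the elementary fact that $n\mapsto n^{N-2}(q/q')^n$ is bounded on $[1,\infty)$ gives $n^{N-1}q^n\le C'\,n\,(q')^n$, yielding $B_n(T)\le C_{10}(T)\,n\,C_{11}(T)^n$ with $C_{11}(T):=q'<1$.

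With this summable bound in hand, the nonnegativity $r_n\ge 0$ (inherited from $\widetilde{\kappa}\ge 0$ by induction) together with monotone convergence ensures that $r(t,s):=\sum_{n\ge 1}r_n(t,s)$ is well defined as an a.e.\ finite function and that $\sum_n$ commutes with $\int_s^t \widetilde{\kappa}(t,u)(\cdot)\,du$. This gives $\int_s^t \widetilde{\kappa}(t,u)r(u,s)\,du=\sum_{n\ge 1}r_{n+1}(t,s)=r(t,s)-\widetilde{\kappa}(t,s)$. The twin identity $r-\widetilde{\kappa}=r\ast\widetilde{\kappa}$ requires the auxiliary equality $r_{n+1}(t,s)=\int_s^t r_n(t,u)\widetilde{\kappa}(u,s)\,du$, which is not the definition but follows by induction on $n$ from $r_2=\widetilde{\kappa}\ast\widetilde{\kappa}$ and the associativity of Volterra convolution, so that $r_{n+1}=\widetilde{\kappa}\ast r_n=r_n\ast\widetilde{\kappa}$.

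The main obstacle is the two-index iteration. A naive single-index Gronwall argument on $B_n(T)$ alone yields only $(M+q)^n$, which carries no decay; what makes the scheme work is that at most $N$ left shifts are possible before a walker hits the absorbing boundary $k=0$ (where $f_n^{(0)}=0$), so every contributing path contains at least $n-N$ factors of $q$. Converting the resulting $n^{N-1}q^n$ into the shape $n\,C_{11}(T)^n$ required by the statement is the small final trick of enlarging $q$ slightly to $q'\in(q,1)$.
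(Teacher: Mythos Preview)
The paper does not actually prove this lemma; it merely states the result and refers the reader to \cite{Zhang} for details. Your proposal therefore supplies what the paper omits, and your argument is correct and self-contained.

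The approach you take is the natural one and is essentially the same as in the cited reference: the Fubini identity $F_{n+1}(t)=\int_0^t\widetilde{\kappa}(t,u)F_n(u)\,du$, the splitting of the integral at $(t-\delta)^+$ using the two hypotheses in \eqref{general kernel:1}, and the resulting two-term recursion $B_{n+1}(t)\le qB_n(t)+MB_n((t-\delta)^+)$ are the standard steps. Your lattice-walk interpretation of the discretized recursion, together with the absorbing boundary $f_n^{(0)}=0$ limiting the number of $M$-factors to at most $N-1$, cleanly explains why the bound is $O(n^{N-1}q^n)$ rather than $(M+q)^n$. The final cosmetic step of trading $n^{N-1}q^n$ for $n\,(q')^n$ with $q'\in(q,1)$ matches the precise form $C_{10}(T)\,n\,C_{11}(T)^n$ of the statement. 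The resolvent identities then follow exactly as you say: Tonelli for the first, and the inductive commutation $r_{n+1}=\widetilde{\kappa}\ast r_n=r_n\ast\widetilde{\kappa}$ (via Fubini) for the second.

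Two minor points worth making explicit in a final write-up: (i) the passage from the $\limsup$ hypothesis to the uniform bound $\int_{(t-\varepsilon)^+}^t\widetilde{\kappa}(t,s)\,ds\le q$ for all $t\in[0,T]$ and $\varepsilon\in(0,\delta]$ uses the substitution $t'=t+\varepsilon$ and covers the boundary case $t\le\delta$ as well; and (ii) to conclude $B_n(T)\le f_n^{(N)}$ one uses $T\le N\delta$ and the monotonicity of $B_n$. Both are implicit in your sketch and are routine.
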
 
Using the above result, the following Volterra type Gronwall inequality was proved in \cite{Zhang}:
\begin{lem} \label{general Gronwall}
	Suppose $\varphi(\cdot), \varphi_0(\cdot)$ are two non-negative locally bounded functions defined on $[0,\infty)$ such that for any $0\le a < b$, 
	$$\varphi(t) \le \varphi_0(t) + \int_a^t \widetilde{\kappa}(t,s)\varphi(s)ds,\ \forall t \in [a,b).$$
	Then we have 
	$$\varphi(t) \le \varphi_0(t)+ \int_a^t r(t,s)\varphi_0(s)ds,\ \forall t \in [a,b).$$
	where $r(t,s)$ is defined as in \eqref{resolvent}. 
\end{lem}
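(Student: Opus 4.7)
\medskip

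\noindent\textbf{Proof proposal for Lemma \ref{general Gronwall}.} The natural strategy is Picard-style iteration of the assumed inequality combined with Fubini rearrangement, using the geometric decay estimate on $\int_a^t r_n(t,s)\,ds$ to control the remainder term.

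\medskip

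First I would fix $0 \le a < b$ and work on $[a,b)$. Let $\Phi_0(t) := \varphi_0(t)$ and define inductively
\[
\Phi_n(t) := \varphi_0(t) + \sum_{k=1}^{n} \int_a^t r_k(t,s)\varphi_0(s)\,ds.
\]
The plan is to prove by induction that
\[
\varphi(t) \le \Phi_n(t) + R_n(t), \qquad R_n(t) := \int_a^t r_n(t,s)\varphi(s)\,ds,
\]
for every $n\ge 1$. The base case $n=1$ is just the hypothesis. For the inductive step, I would substitute the hypothesis into $R_n(t)$:
\[
R_n(t) = \int_a^t r_n(t,s)\varphi(s)\,ds \le \int_a^t r_n(t,s)\varphi_0(s)\,ds + \int_a^t r_n(t,s)\!\int_a^s \widetilde{\kappa}(s,u)\varphi(u)\,du\,ds.
\]
By Fubini (all integrands nonnegative) the double integral equals $\int_a^t\!\big(\int_u^t r_n(t,s)\widetilde{\kappa}(s,u)\,ds\big)\varphi(u)\,du$. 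This is exactly where the recursion in the definition of $r_n$ pays off: the inner integral is $r_{n+1}(t,u)$ by the way the resolvent is built (either directly from the recurrence displayed just before the lemma, or after a symmetric reindexing that exchanges the two compositions guaranteed by the identity $r(t,s) - \widetilde\kappa(t,s) = \int_s^t \widetilde\kappa(t,u)r(u,s)du = \int_s^t r(t,u)\widetilde\kappa(u,s)du$). This advances the induction.

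\medskip

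Next I would send $n\to\infty$. Since $\varphi$ is locally bounded, let $M := \sup_{t\in[a,b]}\varphi(t) < \infty$. Then
\[
0 \le R_n(t) \le M \int_a^t r_n(t,s)\,ds \le M\, C_{10}(T)\, n\, C_{11}(T)^n \xrightarrow[n\to\infty]{} 0,
\]
since $0<C_{11}(T)<1$. Meanwhile, monotone convergence gives $\Phi_n(t) \uparrow \varphi_0(t) + \int_a^t r(t,s)\varphi_0(s)\,ds$. Passing to the limit in $\varphi(t) \le \Phi_n(t) + R_n(t)$ yields the claim.

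\medskip

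The only genuine obstacle is the Fubini/recursion step: one must verify that the iterated kernel identity
\[
\int_u^t r_n(t,s)\widetilde\kappa(s,u)\,ds = r_{n+1}(t,u)
\]
is the correct composition, matching the direction in which $r_n$ was defined in the excerpt. If the definition composes on the left rather than the right, one should start the iteration from $\varphi(s)$ on the outside (substituting the hypothesis into the \emph{first} integral of the inequality rather than into $R_n$); both directions work thanks to the two-sided resolvent identity recalled above. Once this bookkeeping is settled, the remainder of the argument is routine, and local boundedness of $\varphi$ together with the $nC_{11}(T)^n$ decay cleanly kill the error term.
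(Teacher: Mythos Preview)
Your approach is the standard one and is essentially correct. Note that the paper does not actually prove this lemma: it simply attributes the result to \cite{Zhang}, so there is no ``paper's own proof'' to compare against. Your iteration-plus-Fubini argument, with the remainder killed by the $nC_{11}(T)^n$ estimate from the preceding lemma, is exactly the intended route.

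Two minor bookkeeping points, neither of which is a genuine gap:

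\emph{(i) Off-by-one in the inductive claim.} As written, the induction does not close: from $\varphi(t)\le \Phi_n(t)+R_n(t)$ your estimate on $R_n$ gives
\[
\varphi(t)\le \Phi_n(t)+\int_a^t r_n(t,s)\varphi_0(s)\,ds + R_{n+1}(t),
\]
and the middle term is $\int r_n\varphi_0$, not $\int r_{n+1}\varphi_0$, so the right-hand side is not $\Phi_{n+1}(t)+R_{n+1}(t)$. The fix is to state the hypothesis as $\varphi(t)\le \Phi_{n-1}(t)+R_n(t)$; then the base case is literally the assumption (since $\Phi_0=\varphi_0$, $r_1=\widetilde\kappa$) and the step yields $\Phi_{n-1}+\int r_n\varphi_0+R_{n+1}=\Phi_n+R_{n+1}$.

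\emph{(ii) The composition direction.} You correctly flag that the paper defines $r_{n+1}(t,u)=\int_u^t \widetilde\kappa(t,s)r_n(s,u)\,ds$ (left composition), while your Fubini step produces $\int_u^t r_n(t,s)\widetilde\kappa(s,u)\,ds$ (right composition). These coincide: a one-line induction using Fubini shows
\[
\int_u^t r_n(t,s)\widetilde\kappa(s,u)\,ds=\int_u^t \widetilde\kappa(t,s)r_n(s,u)\,ds \quad \text{for all } n\ge 1,
\]
which is precisely the associativity underlying the two-sided resolvent identity you invoke. So no alternative ``outside-in'' iteration is needed.

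With these cosmetic corrections your proof is complete.
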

Apart from Gronwall type inequality, we also need a counterpart to Lemma \ref{scale}. Indeed, the following lemma gives a sufficient condition: 
\begin{lem} \label{general scale}
	If for any $u \in (0,1)$, the function 
	\begin{equation} \label{increase}
	t \mapsto t\widetilde{\kappa}(t,tu)
	\end{equation}
	is an increasing function. Then for any $\theta: [0,+\infty) \rightarrow \mathbb{R}$ locally bounded and $\alpha^* \in [0,1)$, we have for all $T>0$, 
	\begin{equation*}
	\sup_{0\leq t\leq T}\int_0^t \widetilde{\kappa}(t,s) |\theta(s)|ds \le \int_{0}^{T}\sup_{0\leq u\leq t}|\theta(u)|\widetilde{\kappa}(T,t) dt.
	\end{equation*}
\end{lem}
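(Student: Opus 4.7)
The plan is to mimic the proof of Lemma \ref{scale}, where the special structure $\widetilde{\kappa}(t,s)=(t-s)^{-\alpha^*}$ allowed us to pull out a factor of $s^{1-\alpha^*}$ via the substitution $u=r/s$. The monotonicity hypothesis \eqref{increase} is the abstract replacement for that scaling, and it should make exactly the same substitution work for a general kernel.

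First I would fix $T>0$ and any $s\in[0,T]$, and perform the change of variables $r=su$ in the inner integral, to obtain
\begin{equation*}
\int_0^s \widetilde{\kappa}(s,r)|\theta(r)|\,dr \;=\; \int_0^1 s\,\widetilde{\kappa}(s,su)\,|\theta(su)|\,du.
\end{equation*}
Next I would bound $|\theta(su)|$ by $\sup_{0\le r\le su}|\theta(r)|$, and then, since $s\le T$, by $\sup_{0\le r\le Tu}|\theta(r)|$ (using that the sup over a larger set is larger). At this point the hypothesis \eqref{increase} kicks in: since $u\in(0,1)$ is fixed and $s\le T$, the increasing property of $t\mapsto t\widetilde{\kappa}(t,tu)$ gives $s\widetilde{\kappa}(s,su)\le T\widetilde{\kappa}(T,Tu)$. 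Combining these,
\begin{equation*}
\int_0^s \widetilde{\kappa}(s,r)|\theta(r)|\,dr \;\le\; \int_0^1 T\,\widetilde{\kappa}(T,Tu)\,\sup_{0\le r\le Tu}|\theta(r)|\,du.
\end{equation*}

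Finally, substituting back $v=Tu$ (so $dv=T\,du$) turns the right-hand side into $\int_0^T \widetilde{\kappa}(T,v)\sup_{0\le r\le v}|\theta(r)|\,dv$, which no longer depends on $s$. Taking the supremum over $s\in[0,T]$ on the left therefore yields the claimed inequality.

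I do not expect any serious obstacle: the whole proof is a two-step change of variables, and the monotonicity assumption is constructed precisely so that the scaling factor $s\widetilde{\kappa}(s,su)$ can be replaced by its value at $t=T$. The only minor care needed is bookkeeping with the sup: one must enlarge $\sup_{0\le r\le su}$ to $\sup_{0\le r\le Tu}$ before applying the monotonicity, since the monotonicity is a pointwise statement in $u$. Once that is observed, the estimate is immediate.
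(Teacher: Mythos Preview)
Your proposal is correct and follows exactly the approach the paper intends: the paper's proof simply says to repeat the change-of-variable argument from Lemma~\ref{scale}, and your substitution $r=su$, application of the monotonicity hypothesis \eqref{increase} to replace $s\widetilde{\kappa}(s,su)$ by $T\widetilde{\kappa}(T,Tu)$, and final substitution $v=Tu$ carry this out precisely. Your careful remark about enlarging the supremum from $[0,su]$ to $[0,Tu]$ before invoking monotonicity is a nice clarification that the paper's terse proof omits.
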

\begin{proof}
	We can use exactly the same change of variables technique as in the proof of Lemma \ref{scale} to complete the proof.
\end{proof}
Then using the above two lemmas, we can show that Theorem \ref{main:1} and Theorem \ref{main:2} hold  under the conditions \eqref{general kernel:1}, \eqref{increase}. However, for the regularity Theorem \ref{main:3}, we need the special form \eqref{kappa} of the kernel function. 
\subsection{Further questions}
To get the well-posedness of the solution, can the Lipschitz condition \textbf{(A2)} be weakened to an integrability condition as in the case of ordinary SDE, e.g. \cite{XieZhang}? Here we remark that the non-Lipschitz condition in \cite{Wangz} applies here, and what we want is the condition as in \cite{XieZhang}.
In our proof, we essentially used the Lipschitz condition for the small jump coefficient twice. 

First, in our approximating procedure, we need the Lipschitz condition to bound the error so that it decays fast enough. 
In our case, it decays slower than the case of ordinary SDE, but still summable, see \eqref{est3}. Also, it works for general kernel function using Lemma \ref{general Gronwall}. In the case of SDE \cite{XieZhang}, they do not need the Lipschitz condition to bound the error. Indeed, they need an a priori estimate of the Kolmogorov forward equation.
Then by applying Zvonkin's transform, they can bound the error. However, in our case, we have the memory term and we do not have a corresponding Kolmogorov forward equation.

Second, we need the Lipschitz condition to make sure that we can add the large jump term. Recall that in case of ordinary SDE, once we get the well-posedness of the solution without large jumps, we can automatically add the large jump term. The reason is that the solution is a Markov process and the transition function is measurable with respect to the initial point. However, in our case, the stochastic process is non-Markovian. To apply the interlacing procedure to add the large jump term, we use the stopping time argument, see the proof of Proposition \ref{stronger result}, where we essentially used the Lipschitz condtition. \\

The proof of Theorem \ref{main:3} shows that, if the fractional order function $\alpha(\cdot)$ does not have good regularity property, then the regularity the solution will be worse. 
What about some other properties if we weaken the regularity of $\alpha(\cdot)$? For example, what can one say about long time behavior of the solution \cite{Jacquier, Zhang}, density of the solution, etc. \cite{Besalu2}.
\section*{Acknowledgements}

We thank the referees for the constructive and helpful comments, which greatly
improved the quality of this paper. This work was funded in part by the Army Research Office (ARO) MURI Grant W911NF-15-1-0562, by the National Science Foundation under Grants DMS-1620194 and DMS-2012291, and by the Simons Foundation (\#429343).

\end{document}